\newcolumntype{M}{>{\centering\arraybackslash}m{\dimexpr.25\linewidth-2\tabcolsep}}
\newtheorem{theorem}{Theorem}[section]
\newtheorem{lemma}[theorem]{Lemma}
\newtheorem{proposition}[theorem]{Proposition}
\newtheorem{definition}[theorem]{Definition}
\newtheorem{remark}[theorem]{Remark}
\theoremstyle{remark}
\newcommand{\mR}{\mathbb{R}}
\newcommand{\mC}{\mathbb{C}}
\newcommand{\mE}{\mathbb{E}}
\newcommand{\mS}{\mathbb{S}}
\newcommand{\cF}{\mathcal{F}}
\newcommand{\cK}{\mathcal{K}}
\newcommand{\ux}{\underline{x}}
\newcommand{\uy}{\underline{y}}
\newcommand{\uu}{\underline{u}}
\newcommand{\uz}{\underline{z}}
\newcommand{\uo}{\underline{\omega}}
\newcommand{\ueta}{\underline{\eta}}
\newcommand{\uzeta}{\underline{\zeta}}
\newcommand{\unu}{\underline{\nu}}
\newcommand{\bfx}{\mathbf{x}}
\newcommand{\bfy}{\mathbf{y}}
\newcommand{\bfz}{\mathbf{z}}
\newcommand{\bfu}{\mathbf{u}}
\newcommand{\pR}{\partial_{r}}
\begin{document}
\title
{Slice Fourier transform and convolutions}
\author{Lander Cnudde, Hendrik De Bie}

\date{}
\maketitle
\begin{abstract}
Recently the construction of various integral transforms for slice monogenic functions has gained a lot of attention. In line with these developments, the article at hand introduces the slice Fourier transform. In the first part, the kernel function of this integral transform is constructed using the Mehler formula. An explicit expression for the integral transform is obtained and allows for the study of its properties. In the second part, two kinds of corresponding convolutions are examined: Mustard convolutions and convolutions based on generalised translation operators. The paper finishes by demonstrating the connection between both.\\\\
Keywords: Clifford-Hermite functions, slice Fourier transform, Mustard convolution, generalised convolution\\
MSC: 30G35, 33C45
\end{abstract}

\maketitle

\section{Introduction}

\indent In recent years a lot of work has been put into the study of various integral transforms in a slice Clifford analytic setting. In line with earlier research on slice monogenic functions (see e.g. \cite{colombo2009slice,colombo2011noncommutative,Cnudde}), recent developments comprise the generalisation of various well-known integral transforms to this particular setting. So far the cases of the Bergman-Sce transform \cite{bergman-sce}, the Cauchy transform \cite{colombo2011noncommutative} and the (dual) Radon transform \cite{radon} have been treated. However, an integral transform of major importance has not yet been studied. The paper at hand introduces the classical Fourier transform to the slice monogenic framework, giving rise to the slice Fourier transform.\\
\indent In various papers (see e.g. \cite{brackx2005clifford,brackx2006two,de2011class,de2011clifford,de2012clifford}) the Fourier transform has been generalised to the general hypercomplex setting. Depending on the parity of the dimension of the underlying real Clifford algebra, these efforts even lead to closed forms for the corrresponding kernel functions.
The study of these new Fourier transforms has also been carried out from an application point of view, examining the possibility to use them for image processing (see e.g. \cite{Sangwine, SangwineCorrelation, Denis200774, Saliency}) . \\
\indent Up to now it has not been possible to generalise the Fourier transform to the slice Clifford analytic setting because a solid basis was missing. Owing to recent work on Clifford-Hermite functions and their properties (see \cite{Cnudde}), however, the insight in the corresponding function space has been growing. Hence it is now a natural question to address the construction of a slice Fourier transform based on these findings. In order to retrieve a realisation of the $\mathfrak{osp}(1|2)$ Lie superalgebra, \cite{Cnudde} manipulates the slice Cauchy-Riemann operator of \cite{colombo2013nonconstant} such that a slice Dirac operator is obtained. This in turn allowed for the construction of Clifford-Hermite functions which exhibit interesting differential properties. For this reason they qualify extremely well for an eigenfunction basis of the slice Fourier transform to be obtained.
\indent In a first part of this paper, the slice Fourier transform is constructed and its main properties are studied. The classical Fourier transform being an integral transform, the search for a generalisation to the above mentioned slice monogenic setting comes down to finding an appropriate kernel function. Next, the explicit expression of the integral transform allows for a closer study of its basic properties. It is also proved that the slice Fourier transform is well-defined on a subspace of a Clifford analogue of the Hilbert space $L^2$.\\
In this paper the construction of the kernel is achieved using the Mehler formula. Based on a set of eigenfunctions that are orthogonal with respect to a well-chosen inner product and their corresponding eigenvalues, this formula allows for a formal expression of the kernel function. 
Because the aforementioned Clifford-Hermite functions show the desired properties, they are declared eigenfunctions of the slice Fourier transform to be defined and assigned corresponding eigenvalues based on the scalar differential equation. Using their explicit expressions and several technical manipulations, we ultimately find a closed form for the integral transform.\\
Though this method merely starts from eigenfunctions and eigenvalues analogous to those of the classical Fourier transform, the resulting slice Fourier transform shows even more similarities. Not only does the final kernel expression comprise of familiar exponential functions, it is also proven that this kernel function obeys a system of Clifford algebra-valued partial differential equations analogous to the classical transform. At this point, a natural question to ask is whether it also yields a nice convolution property.\\
In the second part of this article, two interpretations of the classical convolution are explored to generalise the classical convolution property to the case of the slice Fourier transform. Based on \cite{Convolutions} both a Mustard convolution and a generalised translation operator are defined and proven to show the desired property. Given the non-commutative nature of the Clifford setting, the commutativity of the classical convolution can of course not be retrieved. One thus has to distinguish between left and right Mustard convolutions and generalised translation operators.\\
In the first approach, the (left) Mustard convolution $\star_S$ is defined such that its slice Fourier transform $\cF_S$ equals the product of the slice Fourier transforms of the constituting functions, or, for certain Clifford-valued functions $f$ and $g$,
\begin{align*}
\cF_S(f\star_S g)= \cF_S(f) \cF_S(g).
\end{align*}
In the second approach, the focus is on the translation operator $t_x$ in the definition of the classical convolution
\begin{align}\label{convolutie}
(f\star g) (x) = \frac{1}{\sqrt{2\pi}} \int_{\mR} t_y (f)(x) g(y) \mathrm{d}y,
\end{align}
where $t_y(f)(x)= f(x-y)$. The operator $t_y$ is such that its classical Fourier transform generates an extra factor identical to the classical kernel function, thus giving rise to the above convolution property. Based on these observations, a (left) generalised translation operator $T$ is defined which behaves analogously under the slice Fourier transform and leads to a slice convolution property as well.\\
Given that both approaches give rise to the same behaviour in the slice Fourier domain, we end by pinpointing the connection between them. \\
The paper is organised as follows. Section 2 summarises, besides some general preliminary results with respect to the slice Clifford context, the main results concerning the Clifford-Hermite functions studied in \cite{Cnudde}. In section 3, the explicit form of the slice Fourier transform is obtained using the Mehler formula and several of its properties are studied. Section 4 addresses two ways to define an appropriate convolution with respect to this transform. Apart from a Mustard-type convolution, the problem is also approached using generalised translation operators.

\section{Preliminaries}
This preliminary section contains some general background on the slice approach in $\mR^{m+1}$ and briefly summarizes the definitions and properties of the Clifford-Hermite functions as defined and proved in \cite{Cnudde}.

\subsection{Slice approach in $\mR^{m+1}$}
The $m+1$-dimensional real Clifford algebra $Cl_{m+1}$ has $m+1$ basis vectors $e_i, i=0,\ldots, m$, which satisfy the anti-commutation relations
$$e_ie_j+e_je_i=-2\delta_{ij}, \qquad i,j=0,\ldots,m.$$
A $k$-vector (where $k\leq m+1$) is an element $e_A$ of $Cl_{m+1}$ such that
$$e_A=e_{i_1}\ldots e_{i_k} $$
where $i_j\in \{0,\ldots,m \}$ for all $j\in \{1,\ldots,k\}$ and with $i_1<\ldots<i_k$. The variable $\bfx \in Cl_{m+1}$ is defined as the $1$-vector which corresponds to the $(m+1)$-tuple $(x_0,\ldots,x_m) \in \mR^{m+1}$ by
$$\bfx=x_0e_0+x_1e_1+\ldots+x_me_m.$$
Using spherical coordinates to describe the $Cl_m$-part $\ux$ of $\bfx$, one can also write
\begin{align*}
\bfx&=x_0e_0 + \ux\\
&=x_0e_0+r\uo,
\end{align*}
where $r=\sqrt{x_1^2+\ldots+x_m^2}$ and $\uo=\ux/r$. A general element $\bfx$ is thus defined by the triplet $(x_0,r,\uo)\in\mR\times \mR^+\times\mathbb{S}^{m-1}$, where $\mathbb{S}^{m-1}$ denotes the $(m-1)$-dimensional sphere in $\mR^m$. The variable $\bfx$ therefore lives in the subspace spanned by the fixed basis vector $e_0$ and the unit $1$-vector $\uo$.
This subspace is called a slice (see e.g.\cite{colombo2009slice} and the book \cite{colombo2011noncommutative}).\\
As a consequence, a general function $f$ of $\bfx$ will depend on $x_0, r$ and $\uo$. Throughout this article such functions $f$ will be written both as $f(\bfx)$ and as $f(x_0,r,\uo)$ because the former is more compact and the latter shows its dependences explicitly. Based on considerations in \cite{colombo2013nonconstant}, in \cite{Cnudde} the following definition was proposed.
\begin{definition}[Slice Dirac operator]
The slice Dirac operator $D_0$ is the partial differential operator defined as
$$D_0=e_0\partial_{x_0}+\uo\partial_r.$$
\end{definition}
\noindent Null-solutions of $D_0$ correspond to slice monogenic functions as studied in e.g.\cite{colombo2009slice,Gentili2007279,alternative}. Together with the multiplication operator $\bfx$ and the slice Dirac operator $D_0$, the Euler operator $\mE=\sum_{i=0}^m x_i\partial_{x_i}$ establishes a realisation of the $\mathfrak{osp}(1|2)$-superalgebra, see \cite{3032075}.

\begin{theorem}\label{osp} 
The operators $\mathbf{x}$, $D_0$ and $\mE$ constitute a Lie superalgebra, isomorphic with $\mathfrak{osp}(1|2)$, with relations
\begin{center}
\begin{tabular}{rlrl}
\emph{(i)} & $\{\bfx, \bfx \} = -2 \lvert \bfx \lvert^2$ & \emph{(ii)} & $ \{D_0, D_0 \} = -2 (\partial_{x_0}^2 + \pR^2)$\\
\emph{(iii)} & $\{\bfx, D_0 \} = -2 \left(\mE + 1\right)$ & \emph{(iv)} & $ [\mE + 1 , D_0 ] = -D_0$\\
\emph{(v)} & $[\lvert \bfx\lvert^2, D_0] = -2 \bfx$ & \emph{(vi)} & $ [\mE + 1, \bfx ] = \bfx$\\
\emph{(vii)} & $[\partial_{x_0}^2 + \pR^2, \bfx] = 2 D_0$ & \emph{(viii)} & $ [\mE + 1 , \partial_{x_0}^2 + \pR^2] = -2(\partial_{x_0}^2 + \pR^2)$\\
\emph{(ix)} & $[\partial_{x_0}^2 + \pR^2, \lvert \bfx\lvert^2] = 4 \left(\mE + 1\right)$ & \emph{(x)} & $ [\mE + 1 , \lvert \bfx\lvert^2 ] = 2\lvert \bfx \lvert^2$.
\end{tabular}
\end{center}
\end{theorem}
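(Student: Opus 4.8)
The plan is to verify the ten relations (i)--(x) by direct computation from the definitions, and then to recognise that they are exactly the structure relations of $\mathfrak{osp}(1|2)$. The whole calculation reduces to a calculus in the two real variables $x_0$ and $r$ together with the two anticommuting ``imaginary units'' $e_0$ and $\uo$. First I would record the facts that do the work: since $\uo=\ux/r$ is an angular quantity it is independent of both $x_0$ and $r$, so $\partial_{x_0}\uo=\pR\uo=0$ and $\pR$ commutes past $\uo$ as an operator; from the Clifford relations $e_0^2=\uo^2=-1$ and $e_0\uo=-\uo e_0$; and, passing to spherical coordinates, the radial part $\sum_{i=1}^m x_i\partial_{x_i}$ of the Euler operator collapses to $r\pR$ because the angular gradient is tangential, so that $\mE=x_0\partial_{x_0}+r\pR$ on the functions considered. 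With these in hand every bracket becomes a short manipulation.

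Most of the relations are then immediate. Relation (i) is just $\bfx^2=-\sum_i x_i^2=-|\bfx|^2$. For (ii) I would expand $D_0^2=(e_0\partial_{x_0}+\uo\pR)^2$: the diagonal terms give $-\partial_{x_0}^2-\pR^2$ through $e_0^2=\uo^2=-1$, while the two mixed terms cancel because $e_0\uo=-\uo e_0$ and $\partial_{x_0},\pR$ commute. The remaining even--even and even--odd brackets (iv), (vi)--(x) reduce, after carrying the constant factors $e_0$ and $\uo$ along, to the elementary one-variable identities $[t\partial_t,\partial_t]=-\partial_t$, $[t\partial_t,t]=t$, $[t\partial_t,t^2]=2t^2$, $[\partial_t^2,t]=2\partial_t$, $[\partial_t^2,t^2]=2+4t\partial_t$ and $[t\partial_t,\partial_t^2]=-2\partial_t^2$, each applied once in the $x_0$-slot and once in the $r$-slot and summed; relation (v) is the single Leibniz computation $D_0(|\bfx|^2f)=2\bfx f+|\bfx|^2D_0f$.

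The genuine content is (iii), the relation producing the Euler operator. Writing $\bfx=x_0e_0+r\uo$ and applying $D_0$ through the product rule, I would compute $\bfx D_0$ and $D_0\bfx$ separately as operators and add them. In $D_0\bfx$ the Leibniz terms $e_0\partial_{x_0}(x_0e_0)$ and $\uo\pR(r\uo)$ each contribute a $-1$, which together supply the $+1$ in $\mE+1$; the purely differential diagonal part gives $-x_0\partial_{x_0}-r\pR=-\mE$; and the off-diagonal pieces proportional to $e_0\uo\,\partial_{x_0}\pR$ must cancel against the corresponding pieces of $\bfx D_0$. That cancellation is exactly where $e_0\uo=-\uo e_0$ is indispensable, and tracking these signs is the one place the computation can slip; the result is $\{\bfx,D_0\}=-2(\mE+1)$.

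Finally, to promote the ten identities to the stated isomorphism I would fix the $\mZ_2$-grading in which $\bfx$ and $D_0$ are odd and $|\bfx|^2$, $\partial_{x_0}^2+\pR^2$ and $\mE+1$ span the even part. Then (i)--(iii) show that anticommutators of odd generators land in the even part, (iv)--(vii) that the even part acts on the odd generators, and (viii)--(x) that the even part closes into a copy of $\mathfrak{sl}(2)$; matching these structure constants with the defining relations of $\mathfrak{osp}(1|2)$ yields the asserted isomorphism. The main obstacle throughout is bookkeeping rather than ideas: keeping the non-commuting factors $e_0$ and $\uo$ in the correct order and consistently treating $\uo$ as annihilated by $\partial_{x_0}$ and $\pR$. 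Once (iii) is settled, the rest is routine.
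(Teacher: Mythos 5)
Your proposal is correct, but it is worth noting that the paper itself contains no computation here: Theorem~2.1 is imported by citation (``see \cite{3032075}'', the general framework of $\mathfrak{osp}(1|2)$ realizations via Dunkl-type operators, with the slice Dirac operator itself coming from \cite{Cnudde}). Your route is therefore a genuinely different, self-contained verification, and its central structural observation is sound: $e_0$ and $\uo$ act as two constant, anticommuting imaginary units annihilated by $\partial_{x_0}$ and $\pR$, so $(\bfx, D_0, \mE)$ is formally the standard Clifford--Dirac triple in the two variables $(x_0,r)$; moreover $\sum_{i=1}^m x_i\partial_{x_i}=r\pR$ holds identically in spherical coordinates, so no restriction on the function class is needed for $\mE=x_0\partial_{x_0}+r\pR$. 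Your reductions check out: (i) and (ii) are as you say; in (iii) one finds $\bfx D_0+D_0\bfx=-2-2(x_0\partial_{x_0}+r\pR)$ after the cross terms cancel, the two Leibniz contributions of $-1$ giving the $-2\cdot 1$ in $-2(\mE+1)$; and (iv), (vi)--(x) do collapse to the one-variable $\mathfrak{sl}(2)$ identities you list, applied in the $x_0$- and $r$-slots and summed. Two small corrections. First, in (iii) the cross terms that must cancel are the \emph{first-order} pieces $x_0(e_0\uo+\uo e_0)\pR$ and $r(e_0\uo+\uo e_0)\partial_{x_0}$, not terms proportional to $e_0\uo\,\partial_{x_0}\pR$ as you write --- the second-order mixed term belongs to relation (ii), where it cancels for the same reason. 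Second, for the final isomorphism claim you should add the (easy but necessary) remark that the graded Jacobi identity is automatic because all five operators live in the associative algebra of operators equipped with the graded commutator, so the closure exhibited by (i)--(x) already produces a Lie superalgebra, which matching structure constants then identifies with $\mathfrak{osp}(1|2)$. What the paper's citation buys is context --- the slice realization appears as one member of a general family of realizations --- whereas your direct computation buys self-containedness at essentially no cost; both are legitimate, and your bookkeeping, with the two slips noted, is accurate.
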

\begin{definition}The Clifford conjugation $\overline{\phantom{ll}}$ is defined as
\begin{align*}
\overline{\lambda} &= \lambda^* && \lambda \in \mC\\
\overline{e_i}& = -e_i && i=0,\ldots,m.\\
\overline{ab}&=\overline{b}\ \overline{a} && a,b\in Cl_{m+1}.
\end{align*}
where $^*$ denotes the standard complex conjugation.
\end{definition}
\begin{definition}The Hilbert space $\mathcal{L}^2$ is defined as
\begin{align*}
\mathcal{L}^2 &= L^{2}(\mR^{m+1},r^{1-m}\mathrm{d}\bfx)\ \otimes\ Cl_{m+1}\\
&=\left\{ f : \mR^{m+1} \rightarrow Cl_{m+1}\ \vline\ \left[ \int_{\mR^{m+1}} \overline{f(\bfx)} f(\bfx)\ r^{1-m}\ \mathrm{d}\bfx \right]_0 < +\infty  \right\}
\end{align*}
where $[\ .\ ]_0$ denotes the scalar part of the expression between the brackets. 
\end{definition}

\noindent On $\mR^{m+1}$ an inner product was defined to be self-adjoint with respect to $D_0$:
\begin{definition}
On the Hilbert module $\mathcal{L}^2$, regarded as a complex vector space, the inner product of two functions $f,g:\mR^{m+1}\rightarrow Cl_{m+1}$ is defined as
\begin{align*}
\langle f,g \rangle = \left[ \int_{\mR^{m+1}} \overline{f(\bfx)}g(\bfx)\ r^{1-m}\ \mathrm{d}\bfx \right]_0=\left[ \int_{\mR^{m+1}} \overline{f(\bfx)}g(\bfx)\ \mathrm{d}x_0 \mathrm{d}r \mathrm{d}\sigma_{\ux} \right]_0
\end{align*}
where $\mathrm{d}\sigma_{\ux}$ denotes the measure on the unit sphere $\mS^{m-1}$ corresponding to the $\ux$-part of $\bfx$.
\end{definition}
\noindent The inner product has some interesting properties.

\begin{proposition}
The inner product $\langle f,g \rangle = \int_{\mR^{m+1}}\overline{f}g\ \mathrm{d}x_0 \mathrm{d}r \mathrm{d} \uo$ on the right $Cl_{m+1}$-module $\mathcal{L}^2$ obeys the relations
\begin{align*}
\langle D_0 f, g\rangle &= \langle f, D_0 g\rangle,\\
\langle \bfx f, g\rangle &= - \langle f, \bfx g\rangle
\end{align*} on a dense subset of $\mathcal{L}^2$.
\end{proposition}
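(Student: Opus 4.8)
The plan is to reduce both identities to algebraic manipulations with the Clifford conjugation, supplemented in the first case by integration by parts. Throughout I would work on the dense subset of $\mathcal{L}^2$ consisting of smooth functions that decay rapidly enough for all boundary contributions to vanish (for instance finite combinations of Clifford--Hermite functions, or smooth compactly supported functions); the phrase ``on a dense subset'' in the statement is precisely the hypothesis that legitimises the integrations by parts. A preliminary observation that makes everything clean is that in spherical coordinates the weight collapses, $r^{1-m}\,\mathrm{d}\bfx = \mathrm{d}x_0\,\mathrm{d}r\,\mathrm{d}\sigma_{\ux}$, so that no derivative of the density enters and $D_0$ has a chance to be symmetric without correction terms.

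I would dispatch the second identity first, since it is purely algebraic. Because $\bfx = x_0 e_0 + r\uo$ is a $1$-vector and both $e_0$ and $\uo$ conjugate to their negatives, the antihomomorphism property $\overline{ab} = \overline{b}\,\overline{a}$ gives $\overline{\bfx} = -\bfx$, hence $\overline{\bfx f} = \overline{f}\,\overline{\bfx} = -\overline{f}\,\bfx$. Substituting into the integral and taking scalar parts yields $\langle \bfx f, g\rangle = -\langle f, \bfx g\rangle$ with no further work.

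For the first identity I would compute $\overline{D_0 f}$ using the same rules together with the fact that the real derivatives $\partial_{x_0}$ and $\partial_r$ commute with the (fibre-wise) conjugation, obtaining
\[
\overline{D_0 f} = -(\partial_{x_0}\overline{f})\,e_0 - (\partial_r\overline{f})\,\uo .
\]
Then $\overline{D_0 f}\,g$ consists of two terms to which I integrate by parts, in $x_0$ over $\mR$ and in $r$ over $[0,\infty)$, using that $e_0$ and $\uo$ are constant under these derivatives. The interior terms reassemble into $\overline{f}\,(e_0\partial_{x_0}g + \uo\partial_r g) = \overline{f}\,D_0 g$, which after taking scalar parts is exactly $\langle f, D_0 g\rangle$; the boundary terms at $\pm\infty$ in $x_0$ and at $r\to\infty$ vanish by the decay assumption.

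The hard part will be the boundary term at $r=0$ produced by the integration by parts in the radial variable, namely $\int_{\mS^{m-1}} \overline{f(x_0,0,\uo)}\,\uo\,g(x_0,0,\uo)\,\mathrm{d}\sigma_{\ux}$. This does not vanish pointwise, and one must use the geometry of the slice decomposition: a function continuous up to $r=0$ takes there the $\uo$-independent value $f(x_0 e_0)$, so the integrand factors as $\overline{f(x_0 e_0)}\,\uo\,g(x_0 e_0)$ and integrates to zero because $\int_{\mS^{m-1}}\uo\,\mathrm{d}\sigma_{\ux}=0$ by antipodal symmetry. I expect verifying this vanishing --- and pinning down exactly which regularity and decay conditions define the dense subset so that both the $r\to\infty$ and $r\to 0$ contributions are controlled --- to be the only genuinely delicate point; the rest is bookkeeping with the conjugation rules.
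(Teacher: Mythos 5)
Your proof is correct, and there is nothing in the paper to compare it against: the proposition is stated without proof, being imported from \cite{Cnudde}, so your argument fills in details the paper omits. The two main steps are sound: $\overline{\bfx}=-\bfx$ immediately gives the second identity, and your observation that $r^{1-m}\,\mathrm{d}\bfx=\mathrm{d}x_0\,\mathrm{d}r\,\mathrm{d}\sigma_{\ux}$ is exactly why the integration by parts for $D_0$ produces no correction terms from the weight. You also correctly isolated the only delicate point, the boundary term at $r=0$, though it is worth making the mechanism fully explicit: for $f(\bfx)=f_1(x_0,r)+\uo f_2(x_0,r)$ and similarly $g$, one computes $\int_{\mS^{m-1}}\overline{f}\,\uo\,g\,\mathrm{d}\sigma_{\ux}\big|_{r=0}=\lvert\mS^{m-1}\rvert\left(\overline{f_2}\,g_1-\overline{f_1}\,g_2\right)\big|_{r=0}$, the $\uo$-linear terms vanishing componentwise since $\int_{\mS^{m-1}}\omega_i\,\mathrm{d}\sigma_{\ux}=0$ (note the componentwise argument is needed because $\uo$ sits \emph{between} two Clifford-valued factors and cannot simply be pulled out). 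So the vanishing genuinely requires $f_2(x_0,0)=g_2(x_0,0)=0$, which is precisely the single-valuedness at the point $x_0e_0$ you invoked; it holds automatically for honest functions on $\mR^{m+1}$, and in particular for the Clifford--Hermite functions, whose $\uo$-dependent part always carries a factor $r$. One small caveat on your choice of dense subset: $\mathcal{V}$ is arguably \emph{not} dense in the full $\mathcal{L}^2$ as defined (its spherical dependence is confined to degrees $0$ and $1$ in $\uo$, so e.g.\ $\omega_1\omega_2\,h(x_0,r)$ is orthogonal to it), a looseness the paper itself inherits; your alternative choice of smooth compactly supported functions on $\mR^{m+1}$ is the safe one, and on it both identities hold exactly as you argued.
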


\subsection{The Clifford-Hermite functions}
Based on the classical definitions, the Clifford-Hermite polynomials and functions are defined using the kernel of the differential operator $D_0$ and the $\mathfrak{osp}(1|2)$-relations. In \cite{Cnudde} it was shown that the kernel of $D_0$ is a right $Cl_{m+1}$-module which is spanned by the homogeneous polynomials $m_k(\bfx)=\left(e_0-1\right) (x_0+\ux)^k$ of degree $k \in \mathbb{N}$.
\begin{definition}[Clifford-Hermite polynomials]
The Clifford-Hermite polynomials $h_{j,k}$ of degree $j$ and order $k$ are defined as
$$h_{j,k}(\bfx)m_k(\bfx)=(\bfx-cD_0)^j m_k(\bfx)$$
where $c\in \mR^+_0$ and $j\in\mathbb{N}$.
\end{definition}
\noindent The parameter $c$ adds some freedom to the definition. In order not to overload notation, however, its presence will not be denoted explicitly. 
We summarise their most important properties (see \cite{Cnudde}).
\begin{theorem}\label{vglC}
The polynomials $H_j(m_k)(\bfx)=h_{j,k}(\bfx)m_k(\bfx)$ are solutions of the differential equation
\begin{align*}
cD_0^2 H_{j}(m_k)(\bfx) -\bfx D_0 H_{j}(m_k)(\bfx)+ C(j,k) H_{j}(m_k)(\bfx) =0
\end{align*}
with $C(j,k)=-2t$ if $j=2t$ and $C(j,k)=-2(k+t+1)$ if $j=2t+1$.
\end{theorem}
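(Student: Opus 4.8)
The plan is to reduce this second-order equation to a single first-order ``lowering'' identity together with a three-term recurrence, after which the statement drops out of a one-line cancellation. Abbreviate $H_j := H_j(m_k)(\bfx) = (\bfx - cD_0)^j m_k(\bfx)$, and recall that $D_0 m_k = 0$ and that $m_k$ is homogeneous of degree $k$, so $\mE m_k = k m_k$. The two facts that do all the work are
\begin{align*}
D_0 H_j &= C(j,k)\, H_{j-1}, \\
\bfx H_{j-1} &= H_j + c\,C(j-1,k)\, H_{j-2},
\end{align*}
the second being immediate from the definition $H_j = \bfx H_{j-1} - c D_0 H_{j-1}$ once the first is available. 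Granting both, I would substitute $D_0^2 H_j = C(j,k)C(j-1,k)H_{j-2}$ and $\bfx D_0 H_j = C(j,k)\,\bfx H_{j-1}$ into $cD_0^2 H_j - \bfx D_0 H_j$; the two multiples of $H_{j-2}$ cancel and what remains is exactly $-C(j,k)H_j$, which is the asserted differential equation.

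So the real content is the lowering relation $D_0 H_j = C(j,k)H_{j-1}$, which I would prove by induction on $j$. The base cases $j=0,1$ are direct: $D_0 H_0 = D_0 m_k = 0$, while relation (iii) of Theorem~\ref{osp}, in the form $D_0\bfx = -2(\mE+1) - \bfx D_0$, gives $D_0 H_1 = D_0\bfx m_k = -2(\mE+1)m_k = -2(k+1)m_k = C(1,k)H_0$. For the inductive step I would apply $D_0$ to $H_{j+1} = \bfx H_j - cD_0 H_j$ and rewrite $D_0\bfx$ by (iii); this produces a term $-2(\mE+1)H_j$. Since $H_j$ is \emph{not} homogeneous for $j\geq 2$, the Euler operator does not act by a scalar, and this is exactly the obstruction. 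I would remove it by carrying a companion relation along the same induction, namely
\begin{align*}
(\mE+1)H_j = (k+1+j)\,H_j - c\,C(j,k)C(j-1,k)\,H_{j-2},
\end{align*}
whose inductive step uses relations (iv) and (vi) of Theorem~\ref{osp} in the form $[\mE+1,\ \bfx-cD_0] = \bfx + cD_0$, which lets the companion relation propagate from index $j$ to $j+1$ once the three-term recurrence and the level-$j$ lowering relation are inserted.

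Feeding the companion relation back into the computation of $D_0 H_{j+1}$, I would find the coefficient of $H_j$ equal to $-2(k+1+j) - C(j,k)$ and the coefficient of $H_{j-2}$ equal to $-2\bigl(b_j + c\,C(j,k)C(j-1,k)\bigr)$, where $b_j$ denotes the $H_{j-2}$-coefficient in the companion relation. The main obstacle is precisely to see that this $H_{j-2}$-term vanishes: it does so exactly because $b_j = -c\,C(j,k)C(j-1,k)$, so that the lowering relation survives with eigenvalue $C(j+1,k) = -2(k+1+j) - C(j,k)$. Finally I would check that this one-step recurrence, combined with $C(0,k)=0$ and $C(1,k)=-2(k+1)$, is equivalent to the two-step relation $C(j+1,k) = C(j-1,k) - 2$, whose solution is exactly $C(2t,k) = -2t$ and $C(2t+1,k) = -2(k+t+1)$. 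Verifying that this arithmetic is self-consistent with the companion relation, equivalently that $b_{j+1} = -c\,C(j+1,k)C(j,k)$, is the one genuinely delicate piece of bookkeeping, and it is what closes the simultaneous induction.
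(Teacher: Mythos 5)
Your proof is correct; I checked the simultaneous induction in detail. One remark on the comparison itself: the paper does not actually prove Theorem~\ref{vglC} --- it is quoted from \cite{Cnudde} --- so there is no in-paper argument to match against, and your derivation stands as a self-contained proof resting only on relations (iii), (iv), (vi) of Theorem~\ref{osp}. Concretely, the pieces all work: the base cases hold ($D_0 H_1 = D_0\bfx m_k = -2(\mE+1)m_k = -2(k+1)m_k = C(1,k)H_0$); the companion identity $(\mE+1)H_j = (k+1+j)H_j - c\,C(j,k)C(j-1,k)\,H_{j-2}$ does propagate, since the $H_{j-3}$ contributions cancel and $b_{j+1} = 2c\,C(j,k) + b_j = -c\,C(j,k)\bigl(C(j-1,k)-2\bigr) = -c\,C(j,k)C(j+1,k)$; in the lowering step the $H_{j-2}$ coefficient is $-2\bigl(b_j + c\,C(j,k)C(j-1,k)\bigr) = 0$, leaving $D_0H_{j+1} = \bigl(-2(k+1+j)-C(j,k)\bigr)H_j$; the recurrence $C(j+1,k) = -2(k+1+j)-C(j,k)$ with $C(0,k)=0$ indeed has solution $C(2t,k)=-2t$, $C(2t+1,k)=-2(k+t+1)$; and the final cancellation $cD_0^2H_j - \bfx D_0 H_j = -C(j,k)H_j$ is immediate from the lowering relation and the three-term recurrence. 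It is worth observing that your two working identities are exactly the Gaussian-stripped forms of relations the paper does use elsewhere: $D_0H_j = C(j,k)H_{j-1}$ corresponds to $\tilde{D_c}^{\dagger}\psi_{j,k} = -c\,C(j,k)\psi_{j-1,k}$ in \eqref{psis} (conjugation by $\exp(-|\bfx|^2/4c)$ sends $cD_0$ to $cD_0 - \bfx/2$), and your recurrence $\bfx H_{j-1} = H_j + c\,C(j-1,k)H_{j-2}$ is the polynomial form of $\psi_{j+1,k} = \bfx\psi_{j,k} - c\,C(j,k)\psi_{j-1,k}$ invoked in the Mehler-kernel computation, so your route is close in spirit to the source. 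An alternative would be to read the equation off the explicit Laguerre expressions of Theorem~\ref{Laguerre} together with the Laguerre differential equation; that is shorter but imports special-function facts, whereas your argument buys a purely algebraic, $\mathfrak{osp}(1|2)$-internal proof at the cost of the companion-relation bookkeeping, which --- as verified above --- does close.
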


\begin{theorem}\label{Laguerre}
The Hermite polynomials $h_{j,k}$ can be expressed as
\begin{align*}
h_{2t,k}(\bfx)&=(2c)^t t!\ L^k_t\left(\frac{\vert\bfx\vert^2}{2c}\right)\\
h_{2t+1,k}(\bfx)&=(2c)^t t!\ \bfx\ L^{k+1}_t\left(\frac{\vert\bfx\vert^2}{2c}\right)
\end{align*}
where $L_t^k$ are the generalised Laguerre polynomials of degree $t$ on the real line.
\end{theorem}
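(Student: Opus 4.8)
The plan is to prove both identities by induction on $t$, exploiting the recursive structure $H_{j+1}(m_k) = (\bfx - cD_0)H_j(m_k)$ together with the fact that $m_k$ lies in the kernel of $D_0$. Writing $s = \vert\bfx\vert^2$ and $\rho = s/(2c)$, the inductive claim is that $H_j(m_k)$ always has the form (scalar radial function)$\,\cdot m_k$ for even $j$ and (scalar radial function)$\,\cdot\, \bfx\, m_k$ for odd $j$. Applying $\bfx - cD_0$ once to such an expression then reduces each step to a contiguous relation between generalised Laguerre polynomials, so that the Clifford-algebraic recursion becomes a purely scalar one.

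First I would record how $\bfx - cD_0$ acts on the two types of expression that occur. Since $m_k \in \ker D_0$ and $D_0 = e_0\partial_{x_0} + \uo\pR$, for any scalar $f = f(\vert\bfx\vert^2)$ one has $D_0(f\, m_k) = (D_0 f)\, m_k$, and a direct computation gives $D_0 f = 2 f'\, \bfx$ (because $\partial_{x_0}s = 2x_0$, $\pR s = 2r$ and $\bfx = x_0 e_0 + r\uo$). Hence
\begin{align*}
(\bfx - cD_0)(f\, m_k) = \bfx\,(f - 2c f')\, m_k.
\end{align*}
For the $\bfx$-type expression I would use relation (iii) of Theorem~\ref{osp}, namely $\{\bfx, D_0\} = -2(\mE+1)$, to move $D_0$ past $\bfx$; combined with $\bfx^2 = -s$ (relation (i)) and the Euler action $\mE(g\, m_k) = (2s g' + k g)\, m_k$ this yields $D_0(\bfx\, g\, m_k) = -2[\,s g' + (k+1) g\,]\, m_k$, and therefore
\begin{align*}
(\bfx - cD_0)(\bfx\, g\, m_k) = \big[\,2c s\, g' + (2c(k+1) - s)\, g\,\big]\, m_k.
\end{align*}
These two formulas are the computational heart of the argument; getting the $\mE$- and $\bfx^2$-bookkeeping right in the second one is the only genuinely delicate point.

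With these in hand the induction closes cleanly. The base case $t=0$ is immediate, since $h_{0,k} = 1 = L_0^k$ and $h_{1,k} = \bfx = \bfx\, L_0^{k+1}$. For the even-to-odd step I substitute $f = (2c)^t t!\, L_t^k(\rho)$ into the first formula; the factor $f - 2cf'$ becomes $(2c)^t t!\,[L_t^k(\rho) - (L_t^k)'(\rho)]$, and the classical contiguity identity $L_t^{k+1} = L_t^k - (L_t^k)'$ delivers exactly $h_{2t+1,k} = (2c)^t t!\, \bfx\, L_t^{k+1}(\rho)$. For the odd-to-even step I substitute $g = (2c)^t t!\, L_t^{k+1}(\rho)$ into the second formula and pass to the variable $\rho$; the bracket becomes $(2c)^{t+1} t!\,[\rho (L_t^{k+1})'(\rho) + (k+1-\rho)L_t^{k+1}(\rho)]$, and the identity
\begin{align*}
\rho (L_t^{k+1})'(\rho) + (k+1-\rho)\, L_t^{k+1}(\rho) = (t+1)\, L_{t+1}^k(\rho),
\end{align*}
which follows by combining the standard relations $x(L_n^\alpha)' = n L_n^\alpha - (n+\alpha)L_{n-1}^\alpha$, the three-term recurrence, and $L_{n}^{\alpha-1} = L_n^\alpha - L_{n-1}^\alpha$, gives $h_{2t+2,k} = (2c)^{t+1}(t+1)!\, L_{t+1}^k(\rho)$. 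This closes the induction.

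An alternative I would keep in reserve is to verify the claimed expressions directly against the second-order equation of Theorem~\ref{vglC}: substituting the proposed forms and using $D_0^2 = -(\partial_{x_0}^2 + \pR^2)$, the Clifford-valued equation collapses, after the change of variable $\rho = \vert\bfx\vert^2/(2c)$, to the generalised Laguerre differential equation $\rho y'' + (\alpha + 1 - \rho)y' + t y = 0$ with $\alpha = k$ in the even case and $\alpha = k+1$ in the odd case. One would then invoke that the Laguerre polynomial is the unique polynomial solution up to a scalar and match the normalisation fixed by the leading coefficient of $(\bfx - cD_0)^j m_k$. I expect the recursive argument above to be the more transparent route, since it produces the constants $(2c)^t t!$ automatically rather than leaving them to a separate normalisation check.
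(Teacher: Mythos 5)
Your induction is correct, and all the computational steps check out: $(\bfx-cD_0)(f\,m_k)=\bfx\,(f-2cf')\,m_k$ follows from $D_0m_k=0$ and $D_0f(\vert\bfx\vert^2)=2f'\bfx$, and your second formula $(\bfx-cD_0)(\bfx\,g\,m_k)=\bigl[2csg'+(2c(k+1)-s)g\bigr]m_k$ follows correctly from relations (i) and (iii) of Theorem~\ref{osp} together with $\mE(g\,m_k)=(2sg'+kg)m_k$. The two Laguerre identities you invoke are also right: $L_t^{k+1}=L_t^k-(L_t^k)'$ is the standard contiguity relation, and $\rho(L_t^{k+1})'+(k+1-\rho)L_t^{k+1}=(t+1)L_{t+1}^k$ follows, as you say, by combining $x(L_n^\alpha)'=nL_n^\alpha-(n+\alpha)L_{n-1}^\alpha$ with $L_t^{k+1}-L_{t-1}^{k+1}=L_t^k$ and $xL_n^{\alpha+1}=(n+\alpha+1)L_n^\alpha-(n+1)L_{n+1}^\alpha$; a sanity check at $t=0$ gives $k+1-\rho=L_1^k(\rho)$ on both sides, and the factor $(t+1)$ correctly upgrades $(2c)^{t+1}t!$ to $(2c)^{t+1}(t+1)!$. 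One point of comparison worth noting: this paper does not actually prove Theorem~\ref{Laguerre} --- it is quoted without proof from \cite{Cnudde} (``We summarise their most important properties''), so your argument cannot diverge from an in-paper proof; rather, it supplies a self-contained one in exactly the algebraic spirit the paper's framework suggests, using only the $\mathfrak{osp}(1|2)$ relations of Theorem~\ref{osp} and classical Laguerre contiguity. Your fallback route via the differential equation of Theorem~\ref{vglC} would also work, but you are right to prefer the recursion: the ODE argument needs the separate uniqueness-plus-normalisation step you flag, whereas the induction produces the constants $(2c)^tt!$ for free, and since $h_{j,k}$ is \emph{defined} by the factorisation $h_{j,k}m_k=(\bfx-cD_0)^jm_k$, exhibiting the product form at each step is all that is required --- no independent well-definedness argument is needed.
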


\noindent Based on the Clifford-Hermite polynomials, the Clifford-Hermite functions can be defined.
\begin{definition}[Clifford-Hermite functions]
The Clifford-Hermite functions $\psi_{j,k}$ are defined as
$$\psi_{j,k}(\bfx)=h_{j,k}(\bfx)\exp\left(-|\bfx|^2/4c\right)$$
where $c\in\mR_0^+$.
\end{definition}

\begin{proposition}
The Clifford-Hermite functions $\psi_{j,k}$ satisfy the relations
\begin{align}\label{psis}
\begin{cases}
\phantom{\tilde{D_c}^\dagger}\psi_{j,k}&=\phantom{tC(j,k)}\tilde{D_c}\psi_{j-1,k}\\
\tilde{D_c}^\dagger\psi_{j,k}&= -c\ C(j,k) \psi_{j-1,k}
\end{cases}
\end{align}
with $\tilde{D_c}=\frac{\bfx}{2} -cD_0$, $C(j,k)$ as in Theorem \ref{vglC} and where $^\dagger$ denotes the adjoint with respect to the inner product.
\end{proposition}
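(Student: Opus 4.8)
The plan is to reduce both identities in \eqref{psis} to purely algebraic relations for the Clifford–Hermite polynomials by conjugating $\tilde{D_c}$ and $\tilde{D_c}^\dagger$ with the Gaussian weight. Write $g = \exp(-|\bfx|^2/4c)$ and let $H_j(m_k) = (\bfx - cD_0)^j m_k = h_{j,k}m_k$ denote the Clifford–Hermite polynomial, so that $\psi_{j,k} = H_j(m_k)\,g$. First I would record that $g$ is scalar-valued and satisfies $D_0 g = -\frac{\bfx}{2c}g$, which is immediate from $D_0 = e_0\partial_{x_0} + \uo\partial_r$ and $|\bfx|^2 = x_0^2 + r^2$. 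Because $g$ commutes with every Clifford number, the product rule for the first-order operator $D_0$ gives, for any polynomial $p$,
$$D_0(pg) = \Big(D_0 p - \frac{\bfx}{2c}\,p\Big)g.$$

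Feeding this into $\tilde{D_c} = \frac{\bfx}{2} - cD_0$ and into its adjoint produces the two conjugation identities
$$\tilde{D_c}(pg) = \big((\bfx - cD_0)p\big)g, \qquad \tilde{D_c}^\dagger(pg) = (-c\,D_0 p)\,g,$$
where for the second identity I first use the self-adjointness of $D_0$ and the anti-self-adjointness of $\bfx$ from the preceding proposition to get $\tilde{D_c}^\dagger = -\frac{\bfx}{2} - cD_0$, and then observe that the two $\frac{\bfx}{2}pg$ terms cancel. The first relation of \eqref{psis} now follows with no further work: $\tilde{D_c}\psi_{j-1,k} = \big((\bfx - cD_0)H_{j-1}(m_k)\big)g = H_j(m_k)g = \psi_{j,k}$, using only $H_j(m_k) = (\bfx - cD_0)^j m_k$.

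For the second relation, the conjugation identity for $\tilde{D_c}^\dagger$ reduces everything to the single lowering identity $D_0 H_j(m_k) = C(j,k)\,H_{j-1}(m_k)$, since then $\tilde{D_c}^\dagger\psi_{j,k} = (-c\,D_0 H_j(m_k))g = -c\,C(j,k)\psi_{j-1,k}$. To establish the lowering identity I would start from Theorem \ref{vglC}. Using $cD_0^2 - \bfx D_0 = -(\bfx - cD_0)D_0$, the differential equation stated there rewrites as $(\bfx - cD_0)D_0 H_j(m_k) = C(j,k)H_j(m_k)$. Substituting $H_j(m_k) = (\bfx - cD_0)H_{j-1}(m_k)$ on the right and collecting terms yields
$$(\bfx - cD_0)\big(D_0 H_j(m_k) - C(j,k)H_{j-1}(m_k)\big) = 0.$$

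The crux, and the step I expect to be the main obstacle, is cancelling the operator $\bfx - cD_0$ from this equation. The cleanest justification should be an injectivity argument on Clifford-valued polynomials: if $(\bfx - cD_0)q = 0$ for some nonzero $q$ of degree $d$, then matching the top homogeneous components forces $\bfx\,q_d = 0$, and since a nonzero vector $\bfx$ is invertible off the origin ($\bfx^{-1} = -\bfx/|\bfx|^2$) this gives $q_d \equiv 0$, a contradiction; hence $\bfx - cD_0$ is injective on polynomials. Applying this to the bracket above yields the lowering identity and completes the second relation. One could instead argue by induction on $j$ directly from the $\mathfrak{osp}(1|2)$ commutator $\{\bfx, D_0\} = -2(\mE+1)$ together with $D_0 m_k = 0$, but that route must track the non-homogeneous action of $\mE$ on $H_{j-1}(m_k)$ and is appreciably more laborious.
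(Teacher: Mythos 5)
Your proof is correct, but note that there is no internal proof to compare it with: the paper imports this proposition verbatim from \cite{Cnudde} (it is the Proposition~6.2 cited later in the proof of the kernel differential equations) and states it without argument, so any proof here is a reconstruction. Your route also inverts the likely original logic: the development in \cite{Cnudde} obtains the raising/lowering relations first (essentially from the $\mathfrak{osp}(1|2)$ relations of Theorem~\ref{osp}, by the induction you sketch at the end) and the differential equation of Theorem~\ref{vglC} as a consequence, whereas you take Theorem~\ref{vglC} as primitive, factor $cD_0^2-\bfx D_0=-(\bfx-cD_0)D_0$, and cancel $\bfx-cD_0$ by an injectivity lemma; since this paper asserts Theorem~\ref{vglC} as an independent given, that inversion is legitimate and buys you a short, induction-free argument, at the price of the cancellation step. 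Your Gaussian-conjugation identities and the adjoint computation $\tilde{D_c}^\dagger=-\frac{\bfx}{2}-cD_0$ are exactly right. One point you should make explicit in the injectivity lemma: $D_0$ does not map arbitrary polynomials in $(x_0,\dots,x_m)$ to polynomials (e.g.\ $\uo\partial_r$ applied to $x_1$ produces $\ux x_1/r^2$), so the lemma must be stated on the module of slice-form polynomials $a(x_0,r)+\uo\, b(x_0,r)$ with $a$ even and $b$ odd in $r$, which is preserved by left multiplication by $\bfx$ and by $D_0$ and contains $q=D_0H_j(m_k)-C(j,k)H_{j-1}(m_k)$; within that class your top-degree argument ($\bfx q_d=0$ together with $\bfx^{-1}=-\bfx/\vert\bfx\vert^2$ off the origin) is sound, and the degenerate case $j=0$ is consistent since $C(0,k)=0$ and $D_0m_k=0$.
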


\begin{theorem}\label{ortho}
Let $\psi_{j_i,k_i}=(\bfx-cD_0)^{j_i}m_{k_i}(\bfx)\exp(-\vert \bfx \vert^2/4c)$ with $m_{k_i}(\bfx)=(e_0-1)(x_0+\ux)^{k_i}$ for $i=1,2$.
The inner product of these two Clifford-Hermite functions $\psi_{j_1,k_1}$ and $\psi_{j_2,k_2}$ is given by
\begin{align*}
\left\langle \psi_{j_1,k_1}, \psi_{j_2,k_2} \right\rangle = A(j_1,k_1) \delta_{j_1j_2} \delta_{k_1k_2}
\end{align*}
with
\begin{align*}A(j_1,k_1)=
\begin{cases}
2(2c)^{2t_1+k_1+1} t_1! (k_1+t_1)! \frac{\pi^{m/2+1}}{\Gamma(m/2)} & j_1=2t_1,\\
2(2c)^{2t_1+k_1+2} t_1! (k_1+t_1+1)! \frac{\pi^{m/2+1}}{\Gamma(m/2)} \qquad & j_1=2t_1+1.
\end{cases}
\end{align*}
\end{theorem}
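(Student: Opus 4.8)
The plan is to exploit the ladder structure encoded in the two relations of \eqref{psis}, treating $\tilde{D_c}=\frac{\bfx}{2}-cD_0$ as a raising operator and $\tilde{D_c}^\dagger$ as a lowering operator, so as to reduce the entire computation to a single Gaussian base case. The starting point is the adjoint identity $\langle f,\tilde{D_c}g\rangle=\langle\tilde{D_c}^\dagger f,g\rangle$, which is available on the Clifford--Hermite functions since these are of Schwartz type and hence lie in the dense subset on which the self-adjointness relations of the inner product hold. Combining the two relations in \eqref{psis} then yields the recursion
\begin{align*}
\langle\psi_{j_1,k_1},\psi_{j_2,k_2}\rangle = -c\,C(j_1,k_1)\,\langle\psi_{j_1-1,k_1},\psi_{j_2-1,k_2}\rangle,
\end{align*}
valid whenever $j_2\ge 1$, in which \emph{both} degree indices drop by one simultaneously. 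A crucial boundary ingredient is that $C(0,k)=0$ (the case $j=0=2t$ in Theorem~\ref{vglC}), so that $\tilde{D_c}^\dagger\psi_{0,k}=0$: the ground state is annihilated by the lowering operator.

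First I would dispose of the Kronecker delta $\delta_{j_1j_2}$. By the Hermitian symmetry of the inner product the asserted identity is symmetric in the two index pairs, so I may assume $j_1\le j_2$ and apply the recursion $j_1$ times, arriving at $\langle\psi_{0,k_1},\psi_{j_2-j_1,k_2}\rangle$ up to an explicit scalar prefactor. If $j_1\ne j_2$ the remaining second index $j_2-j_1$ is strictly positive, and moving one factor of $\tilde{D_c}$ back onto the left slot gives $\langle\tilde{D_c}^\dagger\psi_{0,k_1},\psi_{j_2-j_1-1,k_2}\rangle=0$ by the ground-state annihilation; this forces the inner product to vanish unless $j_1=j_2$. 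When $j_1=j_2=j$ the same $j$-fold reduction instead lands on the base case $\langle\psi_{0,k_1},\psi_{0,k_2}\rangle$, now carrying the accumulated factor $\prod_{l=1}^{j}\bigl(-c\,C(l,k_1)\bigr)$.

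It then remains to evaluate the base case and to bookkeep the prefactor. Writing $m_k(\bfx)=(e_0-1)(x_0+\ux)^k$ and using $\overline{e_0-1}=-(e_0+1)$ together with $(e_0+1)(e_0-1)=e_0^2-1=-2$, the mixed product collapses to $\overline{m_{k_1}}\,m_{k_2}=2\,(x_0-\ux)^{k_1}(x_0+\ux)^{k_2}$. Since $\ux=r\uo$ with $\uo^2=-1$, the paravector $x_0+\ux$ lives in a commutative subalgebra and behaves exactly like a complex number: introducing $x_0=\rho\cos\phi$, $r=\rho\sin\phi$ with $\rho=|\bfx|\ge 0$ and $\phi\in[0,\pi]$, one finds $x_0\pm\ux=\rho\,e^{\pm\uo\phi}$, whence the scalar part is $[\overline{m_{k_1}}\,m_{k_2}]_0=2\rho^{k_1+k_2}\cos\bigl((k_2-k_1)\phi\bigr)$. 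The integral over the sphere $\mS^{m-1}$ contributes the factor $2\pi^{m/2}/\Gamma(m/2)$, the angular integral $\int_0^\pi\cos((k_2-k_1)\phi)\,\mathrm{d}\phi=\pi\,\delta_{k_1k_2}$ produces the remaining Kronecker delta $\delta_{k_1k_2}$, and the radial integral $\int_0^\infty\rho^{2k+1}e^{-\rho^2/2c}\,\mathrm{d}\rho$ evaluates to $c(2c)^k k!$ by a Gamma integral. Together these give $\langle\psi_{0,k},\psi_{0,k}\rangle=A(0,k)$, and a short computation splitting $\prod_{l=1}^{j}\bigl(-c\,C(l,k)\bigr)$ into its even and odd factors $2cs$ and $2c(k+s+1)$ turns the accumulated prefactor into $A(j,k)/A(0,k)$ in each parity.

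The main obstacle will be the base-case identity: correctly handling the Clifford conjugation and the anticommutation of $e_0$ with $\uo$ when reducing $\overline{m_{k_1}}\,m_{k_2}$ to a scalar multiple of $\cos\bigl((k_2-k_1)\phi\bigr)$, since this is precisely where the order orthogonality $\delta_{k_1k_2}$ originates. The subsequent parity-dependent factorial bookkeeping is routine but must be verified separately for even and odd $j$ in order to reproduce both branches of $A(j_1,k_1)$.
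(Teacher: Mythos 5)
Your proof is correct. A point of comparison first: the paper at hand never proves Theorem \ref{ortho} — it is quoted from \cite{Cnudde} — so there is no internal proof to match; your argument is, however, a complete and self-contained derivation built exactly from the infrastructure the paper does quote, namely the ladder relations \eqref{psis}, the adjointness proposition for $D_0$ and $\bfx$ (which gives $\tilde{D_c}^\dagger=-\frac{\bfx}{2}-cD_0$, so $\langle f,\tilde{D_c}g\rangle=\langle\tilde{D_c}^\dagger f,g\rangle$ on Gaussian-decay functions), and the constants $C(j,k)$ of Theorem \ref{vglC}. All the key steps check out: $C(0,k)=0$ gives the ground-state annihilation; the simultaneous-lowering recursion together with Hermitian symmetry ($\langle g,f\rangle=\langle f,g\rangle^*$, valid because $[\overline{a}]_0=[a]_0^*$ and $C(j,k)$ is real) forces $\delta_{j_1j_2}$; and in the base case the identity $\overline{m_{k_1}}\,m_{k_2}=2(x_0-\ux)^{k_1}(x_0+\ux)^{k_2}$ is immediate since the two $e_0$-factors sit adjacent and $-(e_0+1)(e_0-1)=2$, so the "main obstacle" you flag never requires moving $e_0$ past $\uo$ at all. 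I verified the numerics: the angular integral gives $\pi\delta_{k_1k_2}$, the radial integral gives $c(2c)^kk!$, and with the sphere area $2\pi^{m/2}/\Gamma(m/2)$ one gets $\langle\psi_{0,k},\psi_{0,k}\rangle=2(2c)^{k+1}k!\,\pi^{m/2+1}/\Gamma(m/2)=A(0,k)$; the accumulated factor $\prod_{l=1}^{j}\bigl(-c\,C(l,k)\bigr)$ equals $(2c)^{2t}\,t!\,(k+t)!/k!$ for $j=2t$, with the extra factor $2c(k+t+1)$ for $j=2t+1$, reproducing both branches of $A(j_1,k_1)$. Incidentally, the recursion you derive, $\langle\psi_{j,k},\psi_{j,k}\rangle=-c\,C(j,k)\langle\psi_{j-1,k},\psi_{j-1,k}\rangle$, is exactly the identity the paper later invokes (citing Theorem \ref{ortho}) in the proof of the kernel's differential equations, which confirms your ladder-operator route is the intended algebraic one rather than a brute-force Laguerre computation; the only cosmetic remark is that the WLOG via Hermitian symmetry is dispensable, since for $j_1>j_2$ you can equally move one $\tilde{D_c}$ off the left slot and hit $\tilde{D_c}^\dagger\psi_{0,k_2}=0$.
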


\begin{theorem}\label{SDE}
The Clifford-Hermite functions $\psi_{j,k}$ are solutions of the scalar differential equation
\begin{align}\label{scalar}
\left(cD_0^2 + \frac{\vert\bfx\vert^2}{4c}\right) \psi_{j,k}(\bfx) =(j+k+1)\psi_{j,k}(\bfx).
\end{align}
\end{theorem}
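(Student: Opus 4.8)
The plan is to recognise the scalar operator $cD_0^2+\tfrac{|\bfx|^2}{4c}$ as the symmetric, number-operator-like combination of the ladder operators $\tilde{D_c}$ and $\tilde{D_c}^\dagger$ that already appear in \eqref{psis}, and then to read off its eigenvalue on $\psi_{j,k}$ directly from those two relations. This mirrors the standard harmonic-oscillator argument, with the $\mathfrak{osp}(1|2)$-relations of Theorem \ref{osp} playing the role of the canonical commutation relations.

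First I would make the adjoint $\tilde{D_c}^\dagger$ explicit. Since the Proposition gives $D_0^\dagger=D_0$ and $\bfx^\dagger=-\bfx$, and $c\in\mR_0^+$, one has $\tilde{D_c}^\dagger=-\tfrac{\bfx}{2}-cD_0$. I would then compute the two products $\tilde{D_c}^\dagger\tilde{D_c}$ and $\tilde{D_c}\tilde{D_c}^\dagger$ and add them. Upon expanding, the cross terms $\pm\tfrac{c}{2}(\bfx D_0-D_0\bfx)$ occur with opposite signs in the two products and cancel, while the remaining terms combine to give
\begin{align*}
\tilde{D_c}^\dagger\tilde{D_c}+\tilde{D_c}\tilde{D_c}^\dagger=-\tfrac{\bfx^2}{2}+2c^2D_0^2.
\end{align*}
Using $\bfx^2=-|\bfx|^2$ (relation (i) of Theorem \ref{osp}) this is exactly $2c\bigl(cD_0^2+\tfrac{|\bfx|^2}{4c}\bigr)$, so that
\begin{align*}
cD_0^2+\frac{|\bfx|^2}{4c}=\frac{1}{2c}\bigl(\tilde{D_c}^\dagger\tilde{D_c}+\tilde{D_c}\tilde{D_c}^\dagger\bigr).
\end{align*}
The pleasant point is that no higher relations are needed: only the anti-self-adjointness of $\bfx$, the self-adjointness of $D_0$, and $\bfx^2=-|\bfx|^2$.

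Next I would apply this identity to $\psi_{j,k}$ using \eqref{psis}, rewritten as $\tilde{D_c}\psi_{j,k}=\psi_{j+1,k}$ and $\tilde{D_c}^\dagger\psi_{j,k}=-c\,C(j,k)\psi_{j-1,k}$. Composing these gives $\tilde{D_c}^\dagger\tilde{D_c}\,\psi_{j,k}=-c\,C(j+1,k)\psi_{j,k}$ and $\tilde{D_c}\tilde{D_c}^\dagger\,\psi_{j,k}=-c\,C(j,k)\psi_{j,k}$, whence
\begin{align*}
\Bigl(cD_0^2+\frac{|\bfx|^2}{4c}\Bigr)\psi_{j,k}=-\tfrac12\bigl(C(j+1,k)+C(j,k)\bigr)\psi_{j,k}.
\end{align*}
It then remains to evaluate the constant: splitting into the cases $j=2t$ and $j=2t+1$ and inserting the values of $C$ from Theorem \ref{vglC}, a short computation yields $C(j+1,k)+C(j,k)=-2(j+k+1)$ in both parities, so the eigenvalue is $j+k+1$, as claimed. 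The edge case $j=0$ is consistent because $C(0,k)=0$ forces $\tilde{D_c}^\dagger$ to annihilate $\psi_{0,k}$.

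The only genuinely delicate steps are bookkeeping: fixing the sign in $\tilde{D_c}^\dagger$ correctly, and, above all, tracking the index shift when composing the raising and lowering relations (the raising relation in \eqref{psis} is stated as $\psi_{j,k}=\tilde{D_c}\psi_{j-1,k}$, so it must be invoked at level $j+1$). I expect the verification of the operator identity and the two-case evaluation of $C(j+1,k)+C(j,k)$ to be the most error-prone points, but both are routine rather than conceptually hard. One could alternatively try to derive \eqref{scalar} by conjugating the Clifford-valued equation of Theorem \ref{vglC} with the Gaussian $\exp(-|\bfx|^2/4c)$; that route, however, leaves a residual term $(\mE+1)H_j(m_k)$ that does not simplify cleanly because the Clifford--Hermite polynomials are not homogeneous, so I would favour the factorisation approach above.
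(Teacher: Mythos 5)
Your proposal is correct, but note that the paper at hand does not actually prove Theorem \ref{SDE}: it is quoted in the preliminaries from the earlier work \cite{Cnudde}, so there is no in-paper proof to match against. Judged on its own, your factorisation argument is complete and uses only material the paper itself supplies. I verified the key steps: the adjoint $\tilde{D_c}^\dagger=-\tfrac{\bfx}{2}-cD_0$ follows from the stated (anti-)self-adjointness of $D_0$ and $\bfx$; in the anticommutator the cross terms $\pm\tfrac{c}{2}(\bfx D_0-D_0\bfx)$ cancel without any commutation relation, giving $\tilde{D_c}^\dagger\tilde{D_c}+\tilde{D_c}\tilde{D_c}^\dagger=-\tfrac{\bfx^2}{2}+2c^2D_0^2=2c\bigl(cD_0^2+\tfrac{|\bfx|^2}{4c}\bigr)$ via $\bfx^2=-|\bfx|^2$ from Theorem \ref{osp}(i); the index bookkeeping $\tilde{D_c}^\dagger\tilde{D_c}\,\psi_{j,k}=-c\,C(j+1,k)\psi_{j,k}$ and $\tilde{D_c}\tilde{D_c}^\dagger\,\psi_{j,k}=-c\,C(j,k)\psi_{j,k}$ is right; and both parity cases indeed give $C(j+1,k)+C(j,k)=-2(j+k+1)$ (for $j=2t$: $-2t-2(k+t+1)$; for $j=2t+1$: $-2(k+t+1)-2(t+1)$), with the $j=0$ case handled by $C(0,k)=0$. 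This is the standard harmonic-oscillator factorisation and is almost certainly the intended derivation behind the cited result; your closing remark correctly identifies why conjugating Theorem \ref{vglC} by the Gaussian is the less clean route. One small point worth making explicit if you write this up: the operator identity for $\tilde{D_c}^\dagger$ is derived from the inner product on a dense subset, but since $\mathcal{V}$ consists of Schwartz-class functions (polynomials times a Gaussian), applying it to the $\psi_{j,k}$ is legitimate.
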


\begin{definition}The set of finite linear combinations of Clifford-Hermite functions over $Cl_{m+1}$ will be denoted as $\mathcal{V}$, so $\mathcal{V}=\text{span}_{Cl}\{\psi_{j,k}\}$ and $\mathcal{V} \subset \mathcal{L}^2$.
\end{definition}
\begin{remark}\label{splits}
As a consequence any function $f\in\mathcal{V}$ can be written as $f(\bfx)=f_1(x_0,r)+\uo f_2(x_0,r)$, where $f_1$ and $f_2$ are $Cl_{m+1}$-valued functions defined on $\mR \times \mR^+$.
\end{remark}

\section{Slice Fourier transform}
Based on Theorem \ref{SDE}, a formal definition of the slice Fourier transform (see \cite{oscillatorsemigroup}) on $\mathcal{V}$ is obtained by taking the exponential of the scalar differential equation \eqref{scalar}:
\begin{align}\label{formaleigen}
e^{-i\frac{\pi}{2}\left(cD_0^2 + \frac{\vert\bfx\vert^2}{4c}\right)} \psi_{j,k}(\bfx) =(-i)^{(j+k+1)}\psi_{j,k}(\bfx)
\end{align}
where the choice for the sign of $-i$ is arbitrary.
The aim of this section is however to construct an explicit integral expression corresponding to the above formal series expansion. To this end functions $f$ will be restricted to $f \in\mathcal{V}$.
Given its eigenvalues and corresponding eigenfunctions, an appropriate kernel function $\cK^M$ will thus be constructed such that
\begin{equation}\label{general}
\mathcal{F}_S(f)(\bfy) = \int_{\mR^{m+1}} \mathcal{K}^M(\bfx,\bfy) f(\bfx)\ r^{1-m }\mathrm{d}\bfx
\end{equation}
where the integration is performed over the first argument of $\mathcal{K}^M$.
Though the slice Fourier transform is defined using the eigenfunction approach, we will demonstrate that it generalises other properties as well.

\subsection{The Mehler construction}
For the functions $\psi_{j,k}$ to be eigenfunctions of the slice Fourier transform, the action of the transform on these functions is to multiply them with proportionality factors which are given by their corresponding eigenvalues $(-i)^{j+k+1}$ (see expression \eqref{formaleigen}). Together with the orthogonality of the Clifford-Hermite functions proved in Theorem \ref{ortho}, this yields the following formal series expansion for the kernel function $\cK^M$:
\begin{align}
\cK^M(\bfx,\bfy)=\sum_{j,k=0}^{+\infty} \frac{\psi_{j,k}(\bfy)(-i)^{j+k+1} \overline{\psi_{j,k}(\bfx)}}{\langle \psi_{j,k},\psi_{j,k} \rangle},\label{KM}
\end{align}
where the denominator accounts for the normalisation of the eigenfunctions. Indeed, after substituting this expression in equation \eqref{general} and changing the order of integration and summation, what appears is the sum of the projections of $f$ onto all $\psi_{j,k}$'s times the slice Fourier transforms of these $\psi_{j,k}$'s. By definition this exactly yields the image of $f$ under the slice Fourier transform.

\subsubsection{Differential and symmetry properties of $\cK^M$}
Before focussing on the explicit calculation of the kernel function, we show that the formal Mehler formula also generalises the differential equations of the classical Fourier transform.

\begin{theorem}
The Mehler formula \eqref{KM} obeys the following system of Clifford-valued partial differential equations:
\begin{align}
\begin{cases}
D^{\bfy}_0 \mathcal{K}^M(\bfx,\bfy) &= -\frac{i}{2c} \mathcal{K}^M(\bfx,\bfy) \bfx \\
i \bfy \mathcal{K}^M(\bfx,\bfy) &=-2c\ [\mathcal{K}^M(\bfx,\bfy) D^{\bfx}_0]
\end{cases}\label{diffeq}
\end{align}
where $[\ .\ D^{\bfx}_0]$ denotes that the differential operator $D_0^{\bfx}$ is acting to the left.
\end{theorem}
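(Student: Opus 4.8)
The plan is to prove both identities by acting term-by-term on the formal Mehler series \eqref{KM} and then re-summing. The whole argument is driven by the raising and lowering relations \eqref{psis}: writing $\tilde{D_c}=\frac{\bfx}{2}-cD_0$ and using the adjoint properties of the inner product (so that $D_0^\dagger=D_0$ and $\bfx^\dagger=-\bfx$, whence $\tilde{D_c}^\dagger=-\frac{\bfx}{2}-cD_0$), I can solve for the two basic operators as
\begin{align*}
D_0=-\tfrac{1}{2c}\bigl(\tilde{D_c}+\tilde{D_c}^\dagger\bigr),\qquad \bfx=\tilde{D_c}-\tilde{D_c}^\dagger .
\end{align*}
Applied in the $\bfy$-variable, \eqref{psis} then gives the purely algebraic actions $\tilde{D_c}^\bfy\psi_{j,k}=\psi_{j+1,k}$ and $(\tilde{D_c}^\dagger)^\bfy\psi_{j,k}=-c\,C(j,k)\psi_{j-1,k}$, so that $D_0^\bfy$ and left multiplication by $\bfy$ each turn $\psi_{j,k}(\bfy)$ into a combination of $\psi_{j\pm1,k}(\bfy)$. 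This disposes of the left-hand sides of \eqref{diffeq}.

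For the right-hand sides I need the corresponding identities for the conjugated factor $\overline{\psi_{j,k}(\bfx)}$ under right multiplication by $\bfx$ and under the right action $[\,\cdot\,D_0^\bfx]$. Here the key observation is that Clifford conjugation reverses order and sends $\overline{\bfx}=-\bfx$, $\overline{e_0}=-e_0$, $\overline{\uo}=-\uo$, whence $\overline{\tfrac{\bfx}{2}\psi}=-\tfrac12\overline{\psi}\,\bfx$ and $\overline{D_0\psi}=-[\overline{\psi}\,D_0]$ (conjugation commutes with the real derivatives $\partial_{x_0},\partial_r$). Conjugating the two relations \eqref{psis} and adding, respectively subtracting, them yields
\begin{align*}
\overline{\psi_{j,k}}\,\bfx=-\overline{\psi_{j+1,k}}-c\,C(j,k)\,\overline{\psi_{j-1,k}},\qquad [\overline{\psi_{j,k}}\,D_0]=\tfrac{1}{2c}\bigl(\overline{\psi_{j+1,k}}-c\,C(j,k)\,\overline{\psi_{j-1,k}}\bigr).
\end{align*}
These are the mirror images of the $\bfy$-side relations and are exactly what is required to re-express $\cK^M\bfx$ and $[\cK^M D_0^\bfx]$ as series in $\overline{\psi_{j\pm1,k}(\bfx)}$.

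With both sides rewritten as double series in the products $\psi_{a,k}(\bfy)\,\overline{\psi_{b,k}(\bfx)}$, I would shift the summation index $j$ so that the $\bfy$-raising term on one side is matched against the $\bfx$-lowering term on the other (and vice versa). Collecting the coefficient of each product $\psi_{b+1,k}(\bfy)\overline{\psi_{b,k}(\bfx)}$ and $\psi_{b-1,k}(\bfy)\overline{\psi_{b,k}(\bfx)}$, together with the eigenvalue factors $(-i)^{j+k+1}$, the two identities \eqref{diffeq} reduce to the single recurrence
\begin{align*}
\langle\psi_{j+1,k},\psi_{j+1,k}\rangle=-c\,C(j+1,k)\,\langle\psi_{j,k},\psi_{j,k}\rangle
\end{align*}
for the normalisation constants. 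This last relation I would verify directly from the closed form $A(j,k)$ in Theorem \ref{ortho}, splitting into the even and odd cases of $C(j,k)$ from Theorem \ref{vglC}; the shift $j\mapsto j+1$ also supplies the extra factor $-i$ that produces the prefactors $-\frac{i}{2c}$ and $i$ on the right of \eqref{diffeq}.

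I expect the main obstacle to be the bookkeeping in the second step: correctly tracking the order reversal of Clifford conjugation so that the right action $[\overline{\psi_{j,k}}\,D_0]$ and the right multiplication $\overline{\psi_{j,k}}\,\bfx$ come out with the right signs, since a sign error there would misalign the two series and destroy the matching. A secondary, more formal point is that all manipulations are carried out at the level of the series; as the statement concerns the \emph{formal} Mehler kernel this is legitimate, but one should note that the termwise application of $D_0$ and the reindexing are justified only in the formal (or suitably convergent, on $\cV$) sense.
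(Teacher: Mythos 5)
Your proposal is correct and takes essentially the same approach as the paper: termwise action of $D_0^{\bfy}$ and of right multiplication on the Mehler series via the recurrences $\bfx\psi_{j,k}=\psi_{j+1,k}+c\,C(j,k)\psi_{j-1,k}$ and $D_0\psi_{j,k}=\frac{1}{2c}\left(c\,C(j,k)\psi_{j-1,k}-\psi_{j+1,k}\right)$ (which you merely repackage through $\tilde{D_c}$ and $\tilde{D_c}^\dagger$), followed by an index shift and the norm recurrence $\langle\psi_{j,k},\psi_{j,k}\rangle=-c\,C(j,k)\langle\psi_{j-1,k},\psi_{j-1,k}\rangle$ extracted from Theorem \ref{ortho}, exactly as in the paper. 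The only cosmetic difference is that the paper proves the first equation in full, handling the conjugated side at the end via $\overline{\bfx\psi_{j,k}(\bfx)}=-\overline{\psi_{j,k}(\bfx)}\,\bfx$ and declaring the second equation analogous, whereas you conjugate the relations \eqref{psis} up front; your signs and coefficient matching are all consistent with the paper's computation.
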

\begin{proof}
Both expressions can be proved similarly so we only treat the first relation in full detail. From the definition of the Clifford-Hermite functions $\psi_{j,k}$ and the differential properties of the Clifford-Hermite polynomials $h_{j,k}$, one derives that
\begin{align*}
\psi_{j+1,k}(\bfx) &= \bigg[(\bfx-cD_0^{\bfx})h_{j,k}(\bfx)m_{k}(\bfx)\bigg]\exp\left(-\frac{|\bfx|^2}{4c}\right)\nonumber \\
&=\bfx\psi_{j,k}(\bfx)-c\ C(j,k)\psi_{j-1,k}(\bfx)
\end{align*}
where the real-valued factor $C(j,k)$ is as in Theorem \ref{vglC}.
In Proposition $6.2$ of \cite{Cnudde} the following relation for the Clifford-Hermite functions was obtained:

\begin{align*}
\psi_{j,k}(\bfx) &= \left( \frac{\bfx}{2} - cD_0 \right)\psi_{j-1,k}(\bfx).
\end{align*}
Therefore the action of $D_0^{\bfy}$ on $\mathcal{K}^M$ can be written as
\begin{align*}
&D^{\bfy}_0 \mathcal{K}^M(\bfx,\bfy)\\
&= \sum_{j,k=0}^{\infty} \frac{ \left[D^{\bfy}_0 \psi_{j,k}(\bfy)\right] (-i)^{j+k+1} \overline{\psi_{j,k}(\bfx)}}{\langle \psi_{j,k}, \psi_{j,k} \rangle}\\
&= \sum_{j,k=0}^{\infty} \frac{ \left[\frac{\bfy}{2c} \psi_{j,k}(\bfy) - \frac1c \psi_{j+1,k}(\bfy) \right] (-i)^{j+k+1} \overline{\psi_{j,k}(\bfx)}}{\langle \psi_{j,k}, \psi_{j,k} \rangle}\\
&= \frac1{2c} \sum_{j,k=0}^{\infty} \frac{ \left[\psi_{j+1,k}(\bfy) + c\ C(j,k) \psi_{j-1,k}(\bfy) - 2 \psi_{j+1,k}(\bfy) \right] (-i)^{j+k+1} \overline{\psi_{j,k}(\bfx)}}{\langle \psi_{j,k}, \psi_{j,k} \rangle}\\
&= \frac1{2c} \sum_{j,k=0}^{\infty} \frac{ \left[c\ C(j,k) \psi_{j-1,k}(\bfy) - \psi_{j+1,k}(\bfy) \right] (-i)^{j+k+1} \overline{\psi_{j,k}(\bfx)}}{\langle \psi_{j,k}, \psi_{j,k} \rangle}.
\end{align*}
Changing the summation indices yields
\begin{align*}
D^{\bfy}_0 \mathcal{K}^M(\bfx,\bfy) =& \frac12 \sum_{j,k=0}^{\infty} C(j+1,k) \frac{\psi_{j,k}(\bfy) (-i)^{j+k+2} \overline{\psi_{j+1,k}(\bfx)}}{\langle \psi_{j+1,k}, \psi_{j+1,k} \rangle}\\
&- \frac1{2c} \sum_{j=1,k=0}^{\infty} \frac{\psi_{j,k}(\bfy) (-i)^{j+k} \overline{\psi_{j-1,k}(\bfx)}}{\langle \psi_{j-1,k}, \psi_{j-1,k} \rangle}.
\end{align*}
Based on Theorem \ref{ortho}, one has the following relation for all $j,k \in \mathbb{N}$:
\begin{equation*}
\langle \psi_{j,k}, \psi_{j,k} \rangle = -c\ C(j,k) \langle \psi_{j-1,k}, \psi_{j-1,k} \rangle
\end{equation*}
so $D_0^{\bfy}\mathcal{K}^M(\bfx,\bfy)$ can finally be written as
\begin{align*}
-&\frac{1}{2c} \sum_{j,k=0}^{\infty} \frac{\psi_{j,k}(\bfy) (-i)^{j+k+2} \overline{\psi_{j+1,k}(\bfx)}}{\langle \psi_{j,k}, \psi_{j,k} \rangle} + \frac1{2} \sum_{j,k=0}^{\infty} C(j,k)\frac{\psi_{j,k}(\bfy) (-i)^{j+k} \overline{\psi_{j-1,k}(\bfx)}}{\langle \psi_{j,k}, \psi_{j,k} \rangle}\\
=&\frac{i}{2c} \sum_{j,k=0}^{\infty} \frac{\psi_{j,k}(\bfy) (-i)^{j+k+1} \overline{[\psi_{j+1,k}(\bfx)+c\ C(j,k)\psi_{j-1,k}(\bfx)]}}{\langle \psi_{j,k}, \psi_{j,k} \rangle}\\
=&\frac{i}{2c} \sum_{j,k=0}^{\infty} \frac{\psi_{j,k}(\bfy) (-i)^{j+k+1} \overline{\bfx\psi_{j,k}(\bfx)}}{\langle \psi_{j,k}, \psi_{j,k} \rangle}\\
=&-\frac{i}{2c} \mathcal{K}^M(\bfx,\bfy) \bfx,
\end{align*}
which proves the first relation. The proof of the second relation is completely analogous when starting from the right-hand side.
\end{proof}
Furthermore, equation \eqref{KM} exhibits an interesting symmetry property. By the non-commutativity of $\bfx$ and $\bfy$, however, it will only show up with respect to an anti-involution. Indeed, taking the Clifford conjugation of \eqref{KM} yields
\begin{align*}
\overline{\mathcal{K}^M(\bfx,\bfy)}&= \overline{ \sum_{j,k=0}^{\infty} \frac{\psi_{j,k}(\bfy)(-i)^j \overline{\psi_{j,k}(\bfx)}}{\langle \psi_{j,k}, \psi_{j,k} \rangle}}\\
&=\sum_{j,k=0}^{\infty} \frac{\psi_{j,k}(\bfx)\overline{(-i)^j} \overline{\psi_{j,k}(\bfy)}}{\langle \psi_{j,k}, \psi_{j,k} \rangle}=\left(\mathcal{K}^M(\bfy,\bfx)\right)^*.
\end{align*}

\begin{remark} The symmetry of $\cK^M$ allows to demonstrate the equivalence of the two partial differential equations. Indeed, from $\overline{\mathcal{K}^M(\bfx,\bfy)}=\left(\mathcal{K}^M(\bfy,\bfx)\right)^*$ it follows that 
\begin{align*}
\overline{D^{\bfy}_0 \mathcal{K}^M(\bfx,\bfy)} &= \overline{\frac{-i}{2c} \mathcal{K}^M(\bfx,\bfy) \bfx}\\
\Leftrightarrow \left[ \overline{\mathcal{K}^M(\bfx,\bfy)}^{\phantom{\mC}}\ \overline{D^{\bfy}_0} \right] &= \frac{i}{2c} \overline{\bfx} \overline{\mathcal{K}^M(\bfx,\bfy)}\\
\Leftrightarrow \left[ \left(\mathcal{K}^M(\bfy,\bfx)\right)^*\ D^{\bfy}_0\right]  &= \frac{i}{2c} \bfx \left(\mathcal{K}^M(\bfy,\bfx)\right)^*\\
\Leftrightarrow
\left[ \mathcal{K}^M(\bfx,\bfy)^{\phantom{\mC}}\ D^{\bfx}_0 \right] &= \frac{-i}{2c} \bfy \mathcal{K}^M(\bfx,\bfy),
\end{align*}
which equals the second equation. In the last step the complex conjugate is taken and the variables $\bfx$ and $\bfy$, regarded as dummy variables, are interchanged.
\end{remark}

\subsubsection{Closed form of the kernel function $\cK^M$}
Because all entities in \eqref{KM} are known, their explicit expressions can be substituted and a closed expression for $\cK^M$ can be obtained. The explicit form of the Clifford-Hermite functions $\psi_{j,k}$ depends on the parity of their first index $j$. Using the Laguerre form (see Theorem \ref{Laguerre}) for the Clifford-Hermite polynomials, one has
\begin{align*}
\psi_{2t,k}(\bfx)&=(2c)^tt!\ L_t^k\left(\frac{|\bfx|^2}{2c}\right)(e_0-1)(x_0+\ux)^k \exp\left(-\frac{|\bfx|^2}{4c}\right)\\
\psi_{2t+1,k}(\bfx)&=(2c)^tt!\ \bfx\ L_t^{k+1}\left(\frac{|\bfx|^2}{2c}\right)(e_0-1)(x_0+\ux)^k \exp\left(-\frac{|\bfx|^2}{4c}\right)
\end{align*}
and their norms, who show up in the denominator of \eqref{KM}, are given in Theorem \ref{ortho}. After direct substitution of these expressions, one obtains
\begin{align*}
\mathcal{K}^M(\bfx,\bfy) =&\frac{-i\Gamma\left(\frac{m}{2}\right)}{2\pi^{m/2+1}}\ \exp\left(-\frac{|\bfx|^2 + |\bfy|^2}{4c} \right) \\
&\times \left[ \phantom{+i\ }\sum_{k=0}^{+\infty} \frac{\phantom{\bfy}(1-e_0)(y_0+\uy)^k (-i)^k(x_0-\ux)^k(e_0+1)\phantom{\bfx}}{(2c)^{k+1}} H_t^k(\bfx,\bfy) \right.\\
&\left. \phantom{+i}+\ i \sum_{k=0}^{+\infty} \frac{\bfy(1-e_0)(y_0+\uy)^k (-i)^k(x_0-\ux)^k(e_0+1)\bfx}{(2c)^{k+2}} H_t^{k+1}(\bfx,\bfy)\right].
\end{align*}
In each summation, all $t$-dependencies are real-valued and could therefore be grouped in a separate factor $H_t^k$ which reads
\begin{align*}
H_t^k(\bfx,\bfy)= \sum_{t=0}^{+\infty}\frac{(-1)^t t!\ L_t^k\left(\frac{|\bfy|^2}{2c}\right)L_t^k\left(\frac{|\bfx|^2}{2c}\right)}{(k+t)!}
\end{align*}
and is equal for both summations, apart from interchanging $k\leftrightarrow k+1$. The latter expression constitutes a special case of the so-called Hille-Hardy formula \cite{szego}, which reads
\begin{align*}
\sum_{t=0}^{+\infty} \frac{t!}{\Gamma(k+t+1)} L_t^{k}(x) L_t^{k}(y) z^t = \frac{(xyz)^{-\frac{k}{2}}}{(1-z)} \exp\left(-\frac{(x+y)z}{1-z}\right) I_{k}\left(\frac{2\sqrt{xyz}}{1-z}\right)
\end{align*}
for $|z| < 1$ and where the modified Bessel function $I_k$ obeys $I_k(x)=i^{-k}J_k(ix)$. The fact that $H_t^k$ is just outside the general domain of convergence is compensated for by the heuristic character of this reasoning. The validity of the final expression for $\cK^M$ will be verified explicitly afterwards in Theorem \ref{eig}.\\
Moreover, recalling that $\bfx=x_0e_0+\ux$ and $\bfy=y_0e_0+\uy$ these variables obey the identities $ (e_0+1)\bfx=(x_0+\ux)(e_0-1)$ and $\bfy (1-e_0)=(1+e_0)(y_0+\uy)$. Thereby $\mathcal{K}^M$ is proportional to
\begin{align*}
& \phantom{-\ }(1-e_0) \sum_{k=0}^{+\infty} \frac{(y_0+\uy)^{k} (-i)^{k}(x_0-\ux)^{k}}{(|\bfx||\bfy|)^k}J_{k}\left(\frac{|\bfx||\bfy|}{2c}\right)(e_0+1) \\
& - (1+e_0)\sum_{k=0}^{+\infty} \frac{(y_0+\uy)^{k+1} (-i)^{k+1} (x_0-\ux)^{k+1}}{(|\bfx||\bfy|)^{k+1}} J_{k+1}\left(\frac{|\bfx||\bfy|}{2c}\right)(e_0-1) .
\end{align*}
Terms corresponding to Bessel functions of the same order can now be gathered by changing the summation index in the second sum, yielding
\begin{align}
&2 J_0\left(\frac{|\bfx||\bfy|}{2c}\right) \nonumber \\
&+ 2 \sum_{k=1}^{+\infty} (-i)^k \frac{(y_0+\uy)^k (x_0-\ux)^k + (y_0-\uy)^k (x_0+\ux)^k}{(|\bfx||\bfy|)^{k}}  J_k\left(\frac{|\bfx||\bfy|}{2c}\right).\label{besselsum}
\end{align}
At this point, we wish to use the following property of an infinite sum of products of Bessel and cosine functions \cite{watson}:
\begin{align}
e^{-iz \cos(\phi)}&=J_0(z)+2\sum_{n=1}^{+\infty}(-i)^n J_n(z) \cos(n\phi).\label{bessel}
\end{align}
In order to retrieve this structure in our series expression, we introduce polar coordinates for $\bfx$ and $\bfy$, respectively in the $(x_0,r)$- and the $(y_0,g)$-plane, using the respective angles $\chi$ and $\phi$:
\begin{align*}
\frac{x_0+\ux}{\sqrt{x_0^2+r^2}} &= \cos(\chi) + \uo \sin(\chi)\\
\frac{y_0+\uy}{\sqrt{y_0^2+g^2}} &= \cos(\phi) + \ueta \sin(\phi).
\end{align*}
Regarding $\uo$ and $\ueta$ as counterparts of the classical complex unit, these definitions can be split in their real and imaginary parts. Substituting the latter in \eqref{besselsum}, $\cK^M$ turns out to be proportional to
\begin{align*}
2 J_0\left(\frac{|\bfx||\bfy|}{2c}\right)
&+ 2 \sum_{k=1}^{+\infty} (-i)^k \left(\cos\left(k(\phi+\chi)\right) +\cos\left(k(\phi-\chi)\right) \right) J_k\left(\frac{|\bfx||\bfy|}{2c}\right)\\
& - 2 \sum_{k=1}^{+\infty} (-i)^k \left( \cos\left(k(\phi-\chi)\right) - \cos\left(k(\phi+\chi)\right) \right) J_k\left(\frac{|\bfx||\bfy|}{2c}\right) \ueta \uo
\end{align*}
where the appropriate trigonometric formulas have been used. Recalling equation \eqref{bessel}, the previous expression can still be rewritten as
\begin{align*}
\mathcal{K}^M(\bfx,\bfy) =\frac{-i\Gamma\left(\frac{m}{2}\right)}{8c\pi^{m/2+1}} &\left[ \phantom{\ }\left(e^{-\frac{i}{2c}|\bfx||\bfy|\cos(\phi+\chi)} + e^{-\frac{i}{2c}|\bfx||\bfy|\cos(\phi-\chi)}\right) \right.\\
&\left. - \left( e^{-\frac{i}{2c}|\bfx||\bfy|\cos(\phi-\chi)} - e^{-\frac{i}{2c}|\bfx||\bfy|\cos(\phi+\chi)} \right) \ueta\uo \right].
\end{align*}
Finally, eliminating the angular variables and grouping corresponding terms, we obtain
\begin{align}
\mathcal{K}^M(\bfx,\bfy) =\frac{-i\Gamma\left(\frac{m}{2}\right)}{8c\pi^{m/2+1}} \left[ (1+\ueta\uo) e^{-\frac{i}{2c}(x_0y_0-rg)} + (1-\ueta\uo) e^{-\frac{i}{2c}(x_0y_0+rg)} \right].\label{kernel}
\end{align}
A straightforward verification shows that \eqref{kernel} is indeed a solution of the Clifford-valued partial differential equations
\begin{align*}
\begin{cases}
D^{\bfy}_0 \mathcal{K}^M(\bfx,\bfy) &= -\frac{i}{2c} \mathcal{K}^M(\bfx,\bfy) \bfx\\
i \bfy \mathcal{K}^M(\bfx,\bfy) &=-2c\ [\mathcal{K}^M(\bfx,\bfy) D^{\bfx}_0]
\end{cases}
\end{align*}
and also obeys $\overline{\cK^M(\bfx,\bfy)}=\left(\cK^M(\bfy,\bfx)\right)^*$. Having obtained this closed expression for the kernel function, we can conclude this subsection with the explicit definition of the slice Fourier transform.
\begin{definition}[Slice Fourier transform]\label{sFT} The slice Fourier transform of a function $f\in \mathcal{V}$ is given by
\begin{align*}
\mathcal{F}_S(f)(\bfy) =&\frac{-i\Gamma\left(\frac{m}{2}\right)}{8c\pi^{m/2+1}}\times \\ \displaystyle\int\limits_{\mR^{m+1}}& \left[ (1+\ueta\uo) e^{-\frac{i}{2c}(x_0y_0-rg)} + (1-\ueta\uo) e^{-\frac{i}{2c}(x_0y_0+rg)} \right] f(\bfx)\ \mathrm{d}x_0\ \mathrm{d}r\ \mathrm{d}\sigma_{\ux}
\end{align*}
with $\bfx=x_0e_0+r\uo$ and $\bfy=y_0e_0+g\ueta$.
\end{definition}

\begin{remark}
Using Eulers formula, the following equivalent expression for the slice Fourier transform is obtained
\begin{align*}
\mathcal{F}_S(f)(\bfy) =&\frac{-i\Gamma\left(\frac{m}{2}\right)}{4c\pi^{m/2+1}}\displaystyle\int\limits_{\mR^{m+1}} e^{-\frac{i}{2c}(x_0y_0)} \left[ \cos\left(\frac{rg}{2c}\right) + i\ueta\uo \sin\left(\frac{rg}{2c}\right) \right] f(\bfx) \mathrm{d}x_0 \mathrm{d}r \mathrm{d}\sigma_{\ux}.
\end{align*}
\end{remark}

\subsection{Verification of the eigenfunctions}
Now the integral expression for the slice Fourier transform has been obtained explicitly, we can further investigate its properties. In this first subsection we demonstrate that definition \ref{sFT} is indeed an integral transform whose eigenfunctions are the Clifford-Hermite functions.
\begin{theorem}\label{eig}The functions $\psi_{j,k}$ are eigenfunctions of the integral transform $\mathcal{F}_S$ with eigenvalues $(-i)^{j+k+1}$.
\end{theorem}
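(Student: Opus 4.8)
The plan is to substitute the closed form \eqref{kernel} of the kernel together with the explicit Laguerre expressions for $\psi_{j,k}$ into the integral of Definition \ref{sFT} and to evaluate it directly, treating the even case $j=2t$ and the odd case $j=2t+1$ separately. The first move is to separate the integration over $\mathbb{S}^{m-1}$ (the measure $\mathrm{d}\sigma_{\ux}$) from the planar integration over $(x_0,r)$. The only $\uo$-dependence sits in the factor $(x_0+\ux)^k=(x_0+r\uo)^k$ of $\psi_{j,k}$ (and in the extra $\bfx$ for odd $j$) and in the factors $(1\pm\ueta\uo)$ of the kernel; since $\uo^2=-1$ one may write $(x_0+r\uo)^k=\rho^k(\cos k\chi+\uo\sin k\chi)$ with $\rho^2=x_0^2+r^2$ and $\cos\chi=x_0/\rho$, so that the sphere integral only needs the elementary moments $\int_{\mathbb{S}^{m-1}}\mathrm{d}\sigma=\omega_{m-1}$, $\int_{\mathbb{S}^{m-1}}\uo\,\mathrm{d}\sigma=0$ and $\int_{\mathbb{S}^{m-1}}\uo^2\,\mathrm{d}\sigma=-\omega_{m-1}$. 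Carrying the non-commuting factors $e_0,\uo,\ueta$ through carefully (using $e_0\uo=-\uo e_0$ and $\ueta\uo^2=-\ueta$), I expect the idempotent-type factor $(e_0-1)$ to survive on the left and the net effect of the angular average to be the replacement $\uo\mapsto\ueta$, so that $(e_0-1)(x_0+r\uo)^k$ becomes proportional to $(e_0-1)(x_0\mp r\ueta)^k$ for the two kernel terms. The surface constant $\omega_{m-1}=2\pi^{m/2}/\Gamma(m/2)$ then cancels the prefactor $\Gamma(m/2)/\pi^{m/2+1}$ down to the clean constant $1/(4c\pi)$.

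After the angular step the two exponentials $e^{-\frac{i}{2c}(x_0y_0\mp rg)}$ are paired with $(x_0\pm r\ueta)^k$; because $L_t^k(\rho^2/2c)e^{-\rho^2/4c}$ is even in $r$ while the substitution $r\mapsto-r$ swaps the two terms, the sum of the two half-plane integrals collapses into a single integral over the full plane $\mathbb{R}^2$. Regarding $\ueta$ as a copy of the imaginary unit, so that $x_0-r\ueta$ plays the role of $\overline z$ for $z=x_0+ir$, this full-plane integral is exactly a rescaled two-dimensional Fourier transform of the Laguerre function $\overline z^{\,k}L_t^k(|z|^2)e^{-|z|^2/2}$. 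The heart of the computation is the classical fact that these functions are eigenfunctions of the planar Fourier transform with eigenvalue $(-i)^{2t+k}$; after restoring the scale $\sqrt{2c}$ and using $\uy=g\ueta$, the argument flips back to $(y_0+\uy)^k$, reproducing $\psi_{2t,k}(\bfy)$. Combined with the overall prefactor $-i$, this yields the eigenvalue $(-i)^{2t+k+1}=(-i)^{j+k+1}$ for $j=2t$.

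For the odd case $j=2t+1$ the additional left factor $\bfx=x_0e_0+r\uo$ raises the power of $\uo$ by one and shifts the relevant Laguerre order from $k$ to $k+1$; the same angular averaging and planar reduction then produce the eigenvalue $(-i)^{2t+k+2}=(-i)^{j+k+1}$. Alternatively, and probably more cleanly, once the base cases $\mathcal{F}_S(\psi_{0,k})=(-i)^{k+1}\psi_{0,k}$ have been checked by the direct Gaussian integral above, the whole family can be reached by induction on $j$: differentiating under the integral sign and using the kernel identities \eqref{diffeq} together with the (anti-)self-adjointness of $D_0$ and $\bfx$ gives the intertwining relation $\mathcal{F}_S(\tilde{D_c}f)=-i\,\tilde{D_c}\,\mathcal{F}_S(f)$, the sign being fixed by \eqref{diffeq}; then $\mathcal{F}_S(\psi_{j,k})=\mathcal{F}_S(\tilde{D_c}\psi_{j-1,k})=(-i)^{j+k+1}\tilde{D_c}\psi_{j-1,k}=(-i)^{j+k+1}\psi_{j,k}$ by the relation $\psi_{j,k}=\tilde{D_c}\psi_{j-1,k}$ of \eqref{psis}.

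The main obstacle I anticipate is twofold. First, keeping the non-commutative bookkeeping honest in the angular integration, so that the $e_0$-, $\uo$- and $\ueta$-factors recombine into exactly $(e_0-1)(y_0+\uy)^k$ in the correct order; this is what makes the output genuinely \emph{equal} to $\psi_{j,k}$ rather than merely proportional. Second, pinning down the precise eigenvalue and normalisation of the two-dimensional Fourier transform of the Laguerre functions, since the Hille--Hardy identity used to derive \eqref{kernel} was applied just outside its radius of convergence, so this step is exactly where the heuristic derivation has to be confirmed. The recursive variant sidesteps the second difficulty by reducing everything to the single degree-$k$ Gaussian integral of the base case, at the cost of first justifying differentiation under the integral sign and the integration by parts that turns \eqref{diffeq} into the operator intertwining.
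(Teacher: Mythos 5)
Your proposal is correct, and your ``recursive variant'' is in fact the paper's actual proof, so the comparison is best made along that axis. The paper proves the theorem exactly via your intertwining relation: using the kernel system \eqref{diffeq} and integration by parts it obtains $\mathcal{F}_S(\psi_{j,k})(\bfy)=(-i)^j\left(\frac{\bfy}{2}-cD_0^{\bfy}\right)^j\mathcal{F}_S(\psi_{0,k})(\bfy)$, reducing everything to $j=0$. The one place it diverges from your sketch is the base case: rather than evaluating a planar integral containing the polynomial factor $(x_0+\ux)^k$ (your ``direct Gaussian integral''), it removes the $k$-dependence algebraically as well, through the identity $(x_0+\ux)=\left(\frac{1-e_0}{\sqrt{2}}\right)\bfx\left(\frac{1-e_0}{\sqrt{2}}\right)$: since $e_0$ commutes with $\cK^M$, equation \eqref{D1} converts each sandwiched $\bfx$ under the integral into $2ci\left(\frac{1-e_0}{\sqrt{2}}\right)D_0^{\bfy}\left(\frac{1-e_0}{\sqrt{2}}\right)=2ci\left(\partial_{y_0}+\ueta\partial_g\right)$ acting outside, so the only integral ever computed is the pure Gaussian one, $\int_{\mR^{m+1}}\cK^M(\bfx,\bfy)e^{-|\bfx|^2/4c}\,\mathrm{d}x_0\,\mathrm{d}r\,\mathrm{d}\sigma_{\ux}=e^{-|\bfy|^2/4c}$, which needs nothing beyond $\int_{\mS^{m-1}}\uo\,\mathrm{d}\sigma_{\ux}=0$ and two one-dimensional Gaussian integrals; applying $\left(\partial_{y_0}+\ueta\partial_g\right)^k$ to the Gaussian regenerates $(-2c)^{-k}(y_0+\uy)^k$, whence the eigenvalue $(-i)^{k+1}$. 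This sidesteps exactly the two difficulties you flagged: there is no noncommutative angular bookkeeping beyond the vanishing first moment, and no appeal to the Laguerre--Fourier eigenfunction theorem. Your main (direct) route is nonetheless viable and genuinely different---it is essentially the reduction of Section \ref{explicit} combined with the classical fact that $e^{\pm ik\theta}\rho^k L_t^k(\rho^2)e^{-\rho^2/2}$ has two-dimensional Fourier eigenvalue $(-i)^{2t+k}$---and it has the virtue of confirming the heuristic Hille--Hardy step by an independent classical computation; but note one bookkeeping point you glossed over: since $(e_0-1)\uo=-\uo(e_0+1)$, the splitting $f=f_1+\uo f_2$ attaches \emph{different} coefficients, $(e_0-1)$ and $-(e_0+1)$, to the even and odd parts, so the reduced planar function is a combination of \emph{both} chiralities $z^k$ and $\bar z^{\,k}$, not $\bar z^{\,k}$ alone; the argument survives only because both carry the same eigenvalue $(-i)^{2t+k}$, and your claim that the output recombines into exactly $(e_0-1)(y_0+\uy)^k$ requires tracking these two coefficients through the even/odd reconstruction in $g$.
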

\begin{proof}
We already proved that the kernel function $\mathcal{K}^M$ obeys the system of Clifford-valued partial differential equations
\begin{empheq}[left={\empheqlbrace}]{align}
D^{\bfy}_0 \mathcal{K}^M(\bfx,\bfy) &= -\frac{i}{2c} \mathcal{K}^M(\bfx,\bfy) \bfx \label{D1}\\
i \bfy \mathcal{K}^M(\bfx,\bfy) &=-[2c\ \mathcal{K}^M(\bfx,\bfy) D^{\bfx}_0].\label{D2}
\end{empheq}
Using these equations and the fact that $\psi_{j,k}(\bfx)=\left(\frac{\bfx}{2}-cD_0^{\bfx}\right)^j\psi_{0,k}(\bfx)$, one immediately gets
\begin{align*}
\mathcal{F}_S(\psi_{j,k})(\bfy)&=\int_{\mR^{m+1}} \mathcal{K}^M (\bfx,\bfy) \left(\frac{\bfx}{2}-cD_0^{\bfx}\right)^j\psi_{0,k}(\bfx)\ \mathrm{d}x_0\ \mathrm{d}r\ \mathrm{d}\sigma_{\ux}\\
&=(-i)^j\left(\frac{\bfy}{2}-cD_0^{\bfy}\right)^j \int_{\mR^{m+1}} \mathcal{K}^M (\bfx,\bfy) \psi_{0,k}(\bfx)\ \mathrm{d}x_0\ \mathrm{d}r\ \mathrm{d}\sigma_{\ux},
\end{align*}
so we only have to prove that the functions $\psi_{0,k}$ are eigenfunctions of $\mathcal{F}_S$ with corresponding eigenvalues $(-i)^{k+1}$. Using the equality
\begin{equation*}(x_0+\ux)=\left(\frac{1-e_0}{\sqrt{2}}\right) \bfx \left(\frac{1-e_0}{\sqrt{2}}\right),\end{equation*} the expression for the Hermite functions $\psi_{0,k}$ can be rewritten as
\begin{align*}
\psi_{0,k}(\bfx)
&=(e_0-1)\left[\left(\frac{1-e_0}{\sqrt{2}}\right) \bfx \left(\frac{1-e_0}{\sqrt{2}}\right)\right]^k \exp\left(-\frac{|\bfx|^2}{4c}\right).
\end{align*}
Because $\mathcal{K}^Me_0=e_0\mathcal{K}^M$ and using \eqref{D1}, the slice Fourier transform of $\psi_{0,k}$ is given by
\begin{align*}
& \int\displaylimits_{\mR^{m+1}} \mathcal{K}^M (\bfx,\bfy) (e_0-1)\left[\left(\frac{1-e_0}{\sqrt{2}}\right) \bfx \left(\frac{1-e_0}{\sqrt{2}}\right)\right]^k \exp\left(-\frac{|\bfx|^2}{4c}\right) \ \mathrm{d}x_0\ \mathrm{d}r\ \mathrm{d}\sigma_{\ux}\\
&=(e_0-1)(2ci)^k\left[\left(\frac{1-e_0}{\sqrt{2}}\right) D_0^{\bfy} \left(\frac{1-e_0}{\sqrt{2}}\right)\right]^k\\
&\hspace{32ex}\times  \int\displaylimits_{\mR^{m+1}} \mathcal{K}^M (\bfx,\bfy) \exp\left(-\frac{|\bfx|^2}{4c}\right)\ \mathrm{d}x_0\ \mathrm{d}r\ \mathrm{d}\sigma_{\ux}.
\end{align*}
Using the fact that $\int_{\mathbb{S}^m}\uo\ \mathrm{d}\sigma_{\ux}=0$ together with the identities
\begin{align*}
\int\displaylimits_{-\infty}^{+\infty} e^{\pm i\frac{x_0y_0}{2c}}e^{-\frac{x_0^2}{4c}} \mathrm{d}x_0 = 2\sqrt{\pi c}\ e^{-\frac{y_0^2}{4c}}\text{\ \ and \ } \int\displaylimits_{0}^{+\infty} \cos\left( \frac{rg}{2c} \right) e^{-\frac{r^2}{4c}} \mathrm{d}r = \sqrt{\pi c}\ e^{-\frac{g^2}{4c}},
\end{align*}
the integral reduces to 
\begin{align*}
\frac{2}{4\pi c}\int\displaylimits_{-\infty}^{+\infty}\int\displaylimits_{0}^{+\infty} e^{-\frac{i}{2c}x_0y_0}\cos\left(\frac{rg}{2}\right) \exp\left(-\frac{|\bfx|^2}{4c}\right)\ \mathrm{d}x_0\mathrm{d}r = \exp\left(-\frac{|\bfy|^2}{4c}\right)
\end{align*}
and we finally obtain
\begin{align*}
\mathcal{F}_S(\psi_{0,k})(\bfy) &=(2ci)^k(e_0-1)\left[\left(\frac{1-e_0}{\sqrt{2}}\right)D_0^{\bfy} \left(\frac{1-e_0}{\sqrt{2}}\right)\right]^k \exp\left(-\frac{|\bfy|^2}{4c}\right)\\
&=-i(2ci)^k(e_0-1)\left(\partial_{y_0}+\ueta\partial_g\right)^k \exp\left(-\frac{|\bfy|^2}{4c}\right)\\
&=-i\frac{(2ci)^k}{(-2c)^k}(e_0-1)(y_0+\uy)^k \exp\left(-\frac{|\bfy|^2}{4c}\right)\\
&=(-i)^{k+1}\psi_{0,k}(\bfy).
\end{align*}
As was noticed above, the order $j$ yields another factor $(-i)^j$ so we have proven that the functions $\psi_{j,k}$ are eigenfunctions of $\mathcal{F}_S$ with eigenvalues $(-i)^{j+k+1}$.
\end{proof}

\subsection{Verification of the action of the inverse}
In the derivation of the integral kernel, we required the eigenvalues of the functions $\psi_{j,k}$ to be $(-i)^{j+k+1}$. Now the action of the inverse transform is to undo the action of the slice Fourier Transform, so to transform $(-i)^{j+k+1}\psi_{j,k}$ into the original Clifford-Hermite function $\psi_{j,k}$. Therefore we require the eigenvalues of the inverse transform with respect to $\psi_{j,k}$ to be $i^{j+k+1}$. Given the formal series expression \eqref{KM} of $\cK^M$, one quickly observes that by this reasoning the kernel of the inverse transform will be the complex conjugate of the kernel of the slice Fourier Transform. We thus get the following theorem.
\begin{theorem}\label{inverstrans}The functions $\psi_{j,k}$ are eigenfunctions of the integral transform
\begin{align*}
\mathcal{F}_S^{-1}(F)(\bfy) =&\frac{i\Gamma\left(\frac{m}{2}\right)}{8c\pi^{m/2+1}} \\
\int\displaylimits_{\mR^{m+1}} &\left[ (1+\ueta\uo) e^{\frac{i}{2c}(x_0y_0-rg)} + (1-\ueta\uo) e^{\frac{i}{2c}(x_0y_0+rg)} \right] F(\bfx) \mathrm{d}x_0 \mathrm{d}r \mathrm{d}\sigma_{\ux}
\end{align*}
with eigenvalues $i^{j+k+1}$, where $\bfx=x_0e_0+r\uo$ and $\bfy=y_0e_0+g\ueta$.
\end{theorem}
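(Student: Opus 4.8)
The plan is to avoid re-running the integral computation of Theorem~\ref{eig} and instead exploit the fact, anticipated in the discussion preceding the statement, that the inverse kernel is exactly the complex conjugate of $\cK^M$. Comparing the kernel in Theorem~\ref{inverstrans} with the closed form \eqref{kernel} (including the prefactor), one sees that the inverse kernel is obtained from $\cK^M(\bfx,\bfy)$ by replacing every occurrence of the complex unit $i$ by $-i$ while leaving the Clifford generators $e_j$ untouched; that is, the inverse kernel equals $\left(\cK^M(\bfx,\bfy)\right)^*$. First I would record two elementary facts about this complex conjugation $^*$ on $\mC\otimes Cl_{m+1}$: since $i$ is central, $^*$ is a ring homomorphism, so $(ab)^*=a^*b^*$ for all $a,b$; and each $\psi_{j,k}$ is $^*$-invariant, because its explicit expression (generalised Laguerre polynomials, the real Gaussian, powers of $\bfx$ and the factor $e_0-1$, see Theorems~\ref{Laguerre} and~\ref{ortho}) contains no complex unit $i$, so $\psi_{j,k}^*=\psi_{j,k}$.

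Granting these, the theorem follows in one line. Since the integration measure $\mathrm{d}x_0\,\mathrm{d}r\,\mathrm{d}\sigma_{\ux}$ is real, the integral converges absolutely for $f\in\mathcal{V}$, and $^*$ commutes with the integral, I would compute, for any such $^*$-real $f$,
\begin{align*}
\mathcal{F}_S^{-1}(f)(\bfy) &=\int_{\mR^{m+1}}\left(\cK^M(\bfx,\bfy)\right)^* f(\bfx)\ \mathrm{d}x_0\,\mathrm{d}r\,\mathrm{d}\sigma_{\ux}\\
&=\left(\int_{\mR^{m+1}}\cK^M(\bfx,\bfy)\,f(\bfx)\ \mathrm{d}x_0\,\mathrm{d}r\,\mathrm{d}\sigma_{\ux}\right)^*\\
&=\left(\mathcal{F}_S(f)(\bfy)\right)^*,
\end{align*}
where the second equality uses that $^*$ is a ring homomorphism together with $f^*=f$. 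Specialising to $f=\psi_{j,k}$ and invoking Theorem~\ref{eig}, which gives $\mathcal{F}_S(\psi_{j,k})=(-i)^{j+k+1}\psi_{j,k}$, then yields $\mathcal{F}_S^{-1}(\psi_{j,k})=\left((-i)^{j+k+1}\right)^*\psi_{j,k}=i^{j+k+1}\psi_{j,k}$, which is the claim.

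Should one prefer a self-contained argument paralleling Theorem~\ref{eig}, the plan would instead be to verify directly that $\left(\cK^M\right)^*$ satisfies the complex conjugates of \eqref{D1} and \eqref{D2} (with $i$ replaced by $-i$), to reduce to the functions $\psi_{0,k}$ via $\psi_{j,k}=\left(\frac{\bfx}{2}-cD_0^{\bfx}\right)^j\psi_{0,k}$—which now extracts a factor $i^j$ rather than $(-i)^j$—and then to evaluate the base integral $\int_{\mR^{m+1}}\left(\cK^M\right)^*\exp\left(-|\bfx|^2/4c\right)\,\mathrm{d}x_0\,\mathrm{d}r\,\mathrm{d}\sigma_{\ux}$. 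That base integral is unchanged, since $\int_{-\infty}^{+\infty}e^{\pm i x_0y_0/2c}e^{-x_0^2/4c}\,\mathrm{d}x_0$ and the cosine integral are insensitive to the sign of the exponent. I expect the main obstacle of this second route to be the sign bookkeeping: tracking how each replacement $i\mapsto -i$ propagates through the noncommuting Clifford factors $(1-e_0)/\sqrt{2}$ and $\ueta$ so as to flip the eigenvalue from $(-i)^{k+1}$ to $i^{k+1}$ without introducing spurious signs. The conjugation argument above sidesteps this entirely, which is why I would present it as the main proof.
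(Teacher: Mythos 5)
Your main argument is correct, and it takes a genuinely different route from the paper. The paper's proof also begins from the observation that the inverse kernel is the complex conjugate $\left(\cK^M\right)^*$, but it then conjugates the system \eqref{diffeq} into the differential equations $D^{\bfy}_0 \cK^{M,-1}(\bfx,\bfy) = \frac{i}{2c}\,\cK^{M,-1}(\bfx,\bfy)\,\bfx$ and $i\bfy\,\cK^{M,-1}(\bfx,\bfy) = [2c\ \cK^{M,-1}(\bfx,\bfy)\,D_0^{\bfx}]$, and finishes by declaring the remainder ``completely analogous to the proof of Theorem \ref{eig}'' --- that is, it re-runs the reduction to $\psi_{0,k}$ and the Gaussian and cosine integrals with every sign of $i$ flipped. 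That is precisely the fallback route you sketch in your final paragraph, sign bookkeeping and all. Your primary argument short-circuits this: since $^*$ is an $\mR$-algebra automorphism of $\mC\otimes Cl_{m+1}$ (it fixes the generators $e_j$ and conjugates only the central $i$, so no reversal of products occurs, in contrast to the Clifford conjugation), and since each $\psi_{j,k}$ has an $i$-free explicit expression so that $\psi_{j,k}^*=\psi_{j,k}$, the identity $\mathcal{F}_S^{-1}(f)=\left(\mathcal{F}_S(f)\right)^*$ for $^*$-real $f\in\mathcal{V}$ converts Theorem \ref{eig} directly into the claimed eigenvalue relation, with eigenvalue $\left((-i)^{j+k+1}\right)^*=i^{j+k+1}$. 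What each approach buys: yours is an essentially one-line deduction that makes the eigenvalue flip structurally transparent and requires no new computation, while the paper's re-derivation through the conjugated differential system provides an independent check of the kernel mechanics --- not worthless, given that the Mehler/Hille--Hardy derivation of $\cK^M$ was heuristic --- at the cost of repeating the integral evaluations. One small point worth making explicit in your write-up: commuting $^*$ with the integral should be justified componentwise (finitely many complex-valued components, each absolutely integrable because $f\in\mathcal{V}$ has Gaussian decay and the kernel is bounded), which is exactly the observation you sketch when you note absolute convergence.
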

\begin{proof}
The kernel function of the inverse slice Fourier transform, denoted as $\cK^{M,-1}$, is the complex conjugate of $\cK^M$. Therefore we obtain from equation \eqref{diffeq} that it obeys the following system of Clifford-valued partial differential equations:
\begin{align*}
\begin{cases}
D^{\bfy}_0 \mathcal{K}^{M,-1}(\bfx,\bfy) &= \frac{i}{2c} \mathcal{K}^{M,-1}(\bfx,\bfy) \bfx\\
i \bfy \mathcal{K}^{M,-1}(\bfx,\bfy) &=[2c\ \mathcal{K}^{M,-1}(\bfx,\bfy) D^{\bfx}_0].
\end{cases}
\end{align*}
The rest of the proof is completely analogous to the proof of Theorem \ref{eig}.
\end{proof}

\begin{theorem}
The integral transform $\mathcal{F}_S^{-1}$ in the above Theorem \ref{inverstrans} is the inverse slice Fourier transform of a function $F\in\mathcal{V}$.
\end{theorem}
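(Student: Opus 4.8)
The plan is to reduce the statement to the eigenfunction identities already established. Since $\mathcal{V}=\mathrm{span}_{Cl}\{\psi_{j,k}\}$ consists of \emph{finite} linear combinations of Clifford-Hermite functions and both $\mathcal{F}_S$ and $\mathcal{F}_S^{-1}$ are right $Cl_{m+1}$-linear integral operators (the kernel multiplies $f$ from the left, so any right Clifford coefficient passes through the integral untouched), it suffices to verify that $\mathcal{F}_S^{-1}\circ\mathcal{F}_S$ and $\mathcal{F}_S\circ\mathcal{F}_S^{-1}$ act as the identity on each basis function $\psi_{j,k}$; the general case then follows by additivity over the finite sum. First I would record, from Theorem \ref{eig} and Theorem \ref{inverstrans}, the two eigenvalue relations
\begin{align*}
\mathcal{F}_S(\psi_{j,k})=(-i)^{j+k+1}\psi_{j,k}, \qquad \mathcal{F}_S^{-1}(\psi_{j,k})=i^{j+k+1}\psi_{j,k}.
\end{align*}

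Next I would compose the two maps on a single eigenfunction. Because the eigenvalues are powers of the complex unit and the complex structure on $\mathcal{L}^2$ is central, each scalar factor commutes with the Clifford-valued functions and can be extracted from the second integral transform. Hence
\begin{align*}
\mathcal{F}_S^{-1}\big(\mathcal{F}_S(\psi_{j,k})\big)=\mathcal{F}_S^{-1}\big((-i)^{j+k+1}\psi_{j,k}\big)=(-i)^{j+k+1}\,i^{j+k+1}\,\psi_{j,k}=\psi_{j,k},
\end{align*}
where in the last step I use $(-i)^{j+k+1}i^{j+k+1}=\big((-i)i\big)^{j+k+1}=1$. The reversed composition $\mathcal{F}_S\circ\mathcal{F}_S^{-1}$ is handled identically, with the roles of the two eigenvalues interchanged. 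This already shows both $\mathcal{F}_S^{-1}\circ\mathcal{F}_S=\mathrm{id}$ and $\mathcal{F}_S\circ\mathcal{F}_S^{-1}=\mathrm{id}$ on the basis functions.

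Finally I would extend to arbitrary $F\in\mathcal{V}$ by writing $F=\sum_{j,k}\psi_{j,k}\,c_{j,k}$ as a finite sum with $c_{j,k}\in Cl_{m+1}$, applying the composed operator term by term, and using the right $Cl_{m+1}$-linearity noted above together with the basis identity to recover $F$. I should also remark that both transforms map $\mathcal{V}$ into $\mathcal{V}$ (each sends $\psi_{j,k}$ to a nonzero scalar multiple of itself), so the compositions are well-defined on $\mathcal{V}$ and no convergence issue arises, the sums being finite. The only genuinely delicate point, and hence the main obstacle, is justifying that the scalar eigenvalue produced by the first transform may be pulled out of the second one: this hinges on the centrality of the complex unit $i$ relative to $Cl_{m+1}$ in the complex-vector-space structure of $\mathcal{L}^2$, which must be invoked explicitly so that $(-i)^{j+k+1}$ commutes both with $\psi_{j,k}$ and with the kernel $\mathcal{K}^{M,-1}$ inside the integral.
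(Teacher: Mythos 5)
Your proposal is correct and follows essentially the same route as the paper: both verify $(\mathcal{F}_S^{-1}\circ\mathcal{F}_S)(\psi_{j,k})=i^{j+k+1}(-i)^{j+k+1}\psi_{j,k}=\psi_{j,k}$ (and the reversed composition) using Theorems \ref{eig} and \ref{inverstrans}, then extend to $F\in\mathcal{V}$ by linearity over the finite span. Your additional remarks on right $Cl_{m+1}$-linearity and the centrality of the complex unit $i$ are sound justifications of steps the paper leaves implicit, but they do not change the argument.
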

\begin{proof}
Using Theorem \ref{eig} and Theorem \ref{inverstrans} we get for all Clifford-Hermite functions $\psi_{j,k}$ that
\begin{align*}
(\cF_S^{-1}\circ \cF_S)(\psi_{j,k}) = (\cF_S^{-1})((-i)^{j+k+1}\psi_{j,k}) = (i)^{j+k+1}(-i)^{j+k+1}\psi_{j,k}=\psi_{j,k}
\end{align*}
and analogously for $(\cF_S\circ \cF_S^{-1})(\psi_{j,k})$. Because $F\in\text{span}\{\psi_{j,k}\}$ this proves the theorem.
\end{proof}

\subsection{Basic properties of the slice Fourier transform}
Within this subsection the constant prefactor of the trigonometric expression for $\cK^M$ will be denoted as
\begin{equation*}
C_m=\frac{-i\Gamma\left(\frac m2 \right)}{4c\pi^{m/2+1}}.
\end{equation*} 
As announced, the explicit expression for the integral kernel allows for the study of various properties of the slice Fourier transform by direct calculation. Here some of these basic properties are summarised.
\subsubsection{Translation property}
Denoting a translation in the $e_0$-direction as $t_af(x_0,r,\uo) = f(x_0-a,r,\uo)$, one has
\begin{align*}
\mathcal{F}_S(t_af)(\bfy) &= \int\displaylimits_{\mR^{m+1}}\mathcal{K}(\bfx,\bfy)f(x_0-a,r,\uo) \mathrm{d}x_0\mathrm{d}r\mathrm{d}\sigma_{\ux} \\
&= e^{-\frac{iay_0}{2c}} \mathcal{F}_S(f)(\bfy).
\end{align*}
There is no analogous property for translations in the $\uo$-direction because of its spherical nature.
\subsubsection{Reflection property}
Denoting a reflection with respect to the origin as $sf(x_0,r,\uo) = f(-x_0,r,-\uo)$, one has
\begin{align*}
\mathcal{F}_S(sf)(\bfy) &= C_m\ \int\displaylimits_{\mR^{m+1}} e^{\frac{ix_0y_0}{2c}} \left[ \cos\left( \frac{rg}{2c}\right) -\ueta\uo i \sin\left(\frac{rg}{2c}\right) \right] f(x_0,r,\uo) \mathrm{d}x_0\mathrm{d}r\mathrm{d}\sigma_{\ux} \\
&= \mathcal{F}_S(f)(-y_0,g,-\ueta) \\
&= s \mathcal{F}_S(f)(\bfy).
\end{align*}

\subsubsection{Complex conjugation property}
Denoting the complex conjugate of $f$ as before as $f^*$, one has
\begin{align*}
\mathcal{F}_S\left(f^*\right)(\bfy) &= C_m \int\displaylimits_{\mR^{m+1}} e^{-\frac{ix_0y_0}{2c}} \left[ \cos\left( \frac{rg}{2c}\right) +\ueta\uo i \sin\left(\frac{rg}{2c}\right) \right]f^* \mathrm{d}x_0\mathrm{d}r\mathrm{d}\sigma_{\ux}\\
&= -C_m^* \int\displaylimits_{\mR^{m+1}} \left(e^{-\frac{ix_0(-y_0)}{2c}} \left[ \cos\left( \frac{rg}{2c}\right) +(-\ueta)\uo i \sin\left(\frac{rg}{2c}\right) \right]f(\bfx)\right)^* \mathrm{d}x_0\mathrm{d}r\mathrm{d}\sigma_{\ux}\\
&= -\mathcal{F}^*_S(f)(-y_0,g,-\ueta).
\end{align*}

\subsubsection{Commutation with $e_0$}
Given that both $\uo$ and $\ueta$ anticommute with $e_0$, one has
\begin{align*}
\mathcal{F}_S(e_0 f)(\bfy)&= e_0\mathcal{F}_S(f)(\bfy).
\end{align*}

\subsubsection{Twofold transform}
Applying two consecutive slice Fourier transforms to a function $f\in\text{span}\{\psi_{j,k}\}$ yields
\begin{align*}
\mathcal{F}_S\left( \mathcal{F}_S(f) \right)(\bfx) = -f(-\bfx)
\end{align*}
because
$\mathcal{F}_S\left( \mathcal{F}_S(\psi_{j,k}) \right)(\bfx) = (-i)^{2+2j+2k} \psi_{j,k}(\bfx) = (-1)^{j+k+1} \psi_{j,k}(\bfx)$, which equals $-\psi_{j,k}(-\bfx)$ because $\psi_{j,k}(-\bfx)=(-1)^{j+k}\psi_{j,k}(\bfx)$.

\subsection{Explicit calculation}\label{explicit}
In this last subsection, we take a closer look at the computational load of the slice Fourier transform. To this end we consider the computation of the slice Fourier transform of a function $f\in\mathcal{V}$ explicitly. Using Remark \ref{splits}
 and performing the spherical integration in the definition of the transform, we obtain
\begin{align*}
\mathcal{F}_{S}(f)(\bfy) =\frac{-i}{4\pi c} \int\displaylimits_{-\infty}^{+\infty} \int\displaylimits_{0}^{+\infty} & \left[\phantom{-}\left( e^{-\frac{i}{2c}(x_0y_0-rg)} + e^{-\frac{i}{2c}(x_0y_0+rg)} \right) f_1(x_0,r)\right.\\
&\phantom{\big(}-\left. \left( e^{-\frac{i}{2c}(x_0y_0-rg)} - e^{-\frac{i}{2c}(x_0y_0+rg)} \right) \ueta f_2(x_0,r)\right] \mathrm{d}x_0\ \mathrm{d}r,
\end{align*}
which equals
\begin{align*}
\mathcal{F}_{S}(f)(\bfy) =&\frac{-i}{2\pi c} \int\displaylimits_{-\infty}^{+\infty} \int\displaylimits_{0}^{+\infty} e^{-\frac{i}{2c}x_0y_0} \cos\left(\frac{rg}{2c}\right) f_1(x_0,r)\ \mathrm{d}x_0\ \mathrm{d}r\\
&\hspace{17ex}-\ueta \frac{1}{2\pi c} \int\displaylimits_{-\infty}^{+\infty} \int\displaylimits_{0}^{+\infty} e^{-\frac{i}{2c}x_0y_0} \sin\left(\frac{rg}{2c}\right) f_2(x_0,r)\ \mathrm{d}x_0\ \mathrm{d}r.
\end{align*}
Extending $f_1$ to the function $f_1^+:\mR^2\rightarrow Cl^{m+1}: (x_0,r)\mapsto f_1(x_0,|r|)$ which is even in its second argument and analogously extending $f_2$ to the function
\begin{align*}
f_2^-:\mR^2\rightarrow Cl^{m+1}:(x_0,r)\mapsto \begin{cases} \phantom{-}f_2(x_0,\phantom{-}r) \qquad r>0 \\ -f_2(x_0,-r) \qquad r<0 \end{cases}
\end{align*}
which is odd in its second argument, the integrals can be rewritten as
\begin{align*}
\mathcal{F}_{S}(f)(\bfy) =&\frac{-i}{4\pi c} \int\displaylimits_{-\infty}^{+\infty} \int\displaylimits_{-\infty}^{+\infty} e^{-\frac{i}{2c}(x_0y_0+rg)} \left[f_1^+(x_0,r) -i \ueta f_2^-(x_0,r)\right] \mathrm{d}x_0\ \mathrm{d}r.
\end{align*}
Therefore we can conclude that the slice Fourier transform $\mathcal{F}_S(f)$ of a (complex) Clifford-valued function $f$ can be calculated solely using the classical two-dimensional Fourier transform of a single two-dimensional function $F(x_0,r)=f_1^+(x_0,r) -i f_2^-(x_0,r)$:
\begin{align*}
\mathcal{F}_{S}(f)(\bfy) =&\frac{-i}{2 c} \left( FT(F)^+(y_0,g) + \ueta\ FT(F)^-(y_0,g)  \right) 
\end{align*}
Again the superscripts $+$ and $-$ denote, respectively, the even and odd parts of the functions in their second argument and $FT$ is the classical two-dimensional Fourier transform of a complex-valued function $h$ defined as
\begin{align*}
FT(h)(y_0,g)= \frac{1}{2\pi} \int\displaylimits_{-\infty}^{+\infty} \int\displaylimits_{-\infty}^{+\infty} e^{-\frac{i}{2c}(x_0y_0+rg)} h(x_0,r)\ \mathrm{d}x_0\ \mathrm{d}r.
\end{align*}

\section{Convolutions}
In this section two different approaches to the convolution operator associated with the slice Fourier transform are treated. Both of them satisfy the demand that the slice Fourier transform of them equals the product of the separate Fourier transforms. In a first approach the classical definition of the convolution is generalised using the Mustard convolution (see \cite{Mustard}). In a second approach the translation operator in the integrand of the convolution is generalised, based on \cite{Convolutions}. Finally, given that both approaches give rise to the same behaviour in the Fourier domain, we pinpoint the connection between both in the last subsection.

A major drawback in constructing convolutions in a Clifford setting, however, is the lack of commutativity. Any Clifford counterpart to the classical convolution will therefore automatically fail to generalise one of its primary properties, namely $f\star g=g\star f$.

\begin{remark}
In this section all generalisations are constructed such that in the Fourier domain the function $f$ is right multiplied with $g$. Analogous constructions can be done for left multiplication.
\end{remark}

\subsection{Technical prerequisites}
\noindent Let us first recall the following lemma which computes the integral of a squared component of a unit vector over the unit sphere.
\begin{lemma}\label{omegai}
With $\uo = \omega_1 e_1 + \omega_2 e_2 + \ldots + \omega_{m} e_{m}$ a unit vector in $Cl_{m}$, one has 
\begin{align*}
\int\displaylimits_{\mathbb{S}^{m-1}} \omega_i^2\mathrm{d}\sigma_{\uo}&= \frac{2\pi^{m/2}}{m\Gamma\left(\frac{m}{2}\right)}.
\end{align*}
\end{lemma}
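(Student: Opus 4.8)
The plan is to avoid any direct integration by exploiting the symmetry of the sphere together with the defining constraint of a unit vector. Write $I_i = \int_{\mathbb{S}^{m-1}} \omega_i^2\, \mathrm{d}\sigma_{\uo}$ for the quantity in question. First I would observe that the surface measure $\mathrm{d}\sigma_{\uo}$ on $\mathbb{S}^{m-1}$ is invariant under permutations of the coordinates $\omega_1,\ldots,\omega_m$ (indeed under the full orthogonal group $O(m)$), so that $I_i = I_j$ for all $i,j$. Denote this common value by $I$.

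Second, I would sum over $i$ and use that $\sum_{i=1}^m \omega_i^2 = \lvert\uo\rvert^2 = 1$ everywhere on the unit sphere. Since the sum is finite, interchanging it with the integral is immediate, and one gets
\[
m I = \sum_{i=1}^m \int_{\mathbb{S}^{m-1}} \omega_i^2\, \mathrm{d}\sigma_{\uo} = \int_{\mathbb{S}^{m-1}} \Big( \sum_{i=1}^m \omega_i^2 \Big)\, \mathrm{d}\sigma_{\uo} = \int_{\mathbb{S}^{m-1}} \mathrm{d}\sigma_{\uo},
\]
which is simply the total surface measure of $\mathbb{S}^{m-1}$. Invoking the standard value $\int_{\mathbb{S}^{m-1}} \mathrm{d}\sigma_{\uo} = \frac{2\pi^{m/2}}{\Gamma(m/2)}$ and dividing by $m$ then yields $I = \frac{2\pi^{m/2}}{m\,\Gamma(m/2)}$, as claimed.

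There is no genuine obstacle here; the only points requiring care are the (trivial) justification of the sum–integral interchange and the citation of the surface-area formula for the sphere. Should one prefer a self-contained route that also produces the surface-area constant, I would instead evaluate $\int_{\mR^m} x_i^2\, e^{-\lvert x\rvert^2}\, \mathrm{d}x$ in two ways: directly it factors as a one-variable Gaussian $\int_{\mR} t^2 e^{-t^2}\mathrm{d}t = \tfrac{\sqrt\pi}{2}$ times $m-1$ copies of $\int_{\mR} e^{-t^2}\mathrm{d}t = \sqrt\pi$, giving $\tfrac12\pi^{m/2}$, while in polar coordinates $x = \rho\uo$ it separates into the radial integral $\int_0^{\infty} \rho^{m+1} e^{-\rho^2}\mathrm{d}\rho = \tfrac12\Gamma\!\left(\tfrac m2 + 1\right)$ times the spherical integral $I$. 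Comparing the two expressions and using $\Gamma\!\left(\tfrac m2 + 1\right) = \tfrac m2\,\Gamma\!\left(\tfrac m2\right)$ recovers the same value $\frac{2\pi^{m/2}}{m\,\Gamma(m/2)}$. The symmetry argument is, however, shorter and it is the one I would present.
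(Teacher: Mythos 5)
Your symmetry argument is correct and is essentially the paper's own proof: the paper merely encodes the constraint $\sum_{i=1}^m \omega_i^2 = 1$ via the Clifford identity $\int_{\mathbb{S}^{m-1}} \uo^2\,\mathrm{d}\sigma_{\uo} = -\int_{\mathbb{S}^{m-1}} 1\,\mathrm{d}\sigma_{\uo} = -\sum_{i=1}^m \int_{\mathbb{S}^{m-1}} \omega_i^2\,\mathrm{d}\sigma_{\uo}$, and then, exactly as you do, invokes the symmetry among the coordinates together with the surface-area value $\frac{2\pi^{m/2}}{\Gamma\left(\frac{m}{2}\right)}$. Your auxiliary Gaussian-integral derivation of the surface-area constant is a fine self-contained supplement but is not part of the paper's argument and is not needed.
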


\begin{proof}
Given that 
\begin{align*}
\int\displaylimits_{\mathbb{S}^{m-1}} \uo^2\mathrm{d}\sigma_{\uo}=
-\int\displaylimits_{\mathbb{S}^{m-1}} 1\ \mathrm{d}\sigma_{\uo}=
-\sum_{i=1}^m\int\displaylimits_{\mathbb{S}^{m-1}} \omega_i^2\mathrm{d}\sigma_{\uo},
\end{align*}
the above expression follows from the symmetry of the problem and the expression for the area of an $m$-dimensional sphere, $\frac{2\pi^{m/2}}{\Gamma\left(\frac{m}{2}\right)}$.
\end{proof}

\noindent Another useful lemma addresses the sum of a $k$-vector squeezed between all basis elements of $Cl_m$.
\begin{lemma}\label{eae}
With $\underline{a}^{(k)}$ a $k$-vector in $Cl_m$ (so $k\leq m$), one has 
\begin{align*}
\sum_{i=1}^m e_i\underline{a}^{(k)}e_i = (-1)^k(2k-m) \underline{a}^{(k)}.
\end{align*}
\end{lemma}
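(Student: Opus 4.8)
The plan is to reduce the identity to a single basis $k$-vector and then track signs using only the anticommutation relations $e_ie_j+e_je_i=-2\delta_{ij}$. Since both sides are linear in $\underline{a}^{(k)}$, it suffices to verify the formula when $\underline{a}^{(k)}=e_A=e_{i_1}\cdots e_{i_k}$ is a basis $k$-vector with $1\le i_1<\cdots<i_k\le m$. The entire argument then boils down to computing the conjugation $e_ie_Ae_i$ for an arbitrary index $i\in\{1,\ldots,m\}$ and summing over $i$.

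The key step is to split the sum according to whether $i$ occurs among $i_1,\ldots,i_k$. If $i\notin\{i_1,\ldots,i_k\}$, then $e_i$ anticommutes with each of the $k$ factors of $e_A$, so $e_ie_A=(-1)^ke_Ae_i$, and since $e_i^2=-1$ this yields $e_ie_Ae_i=(-1)^{k+1}e_A$. If instead $i=i_j$ for some $j$, then dragging the left-hand $e_i$ rightward to meet $e_{i_j}$ costs a sign $(-1)^{j-1}$ and collapses via $e_ie_{i_j}=e_i^2=-1$; reinserting the right-hand $e_i$ back into the $j$-th slot costs a further sign $(-1)^{k-j}$. These combine to $e_ie_Ae_i=(-1)^{j}(-1)^{k-j}e_A=(-1)^ke_A$, conveniently independent of $j$.

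Finally I would count the two types of index. Exactly $k$ indices $i$ lie in $\{i_1,\ldots,i_k\}$, each contributing $(-1)^ke_A$, while the remaining $m-k$ indices lie outside, each contributing $(-1)^{k+1}e_A=-(-1)^ke_A$. Summing,
\[
\sum_{i=1}^m e_ie_Ae_i=\big(k-(m-k)\big)(-1)^ke_A=(-1)^k(2k-m)e_A,
\]
which is the claimed identity for $e_A$ and hence, by linearity, for a general $k$-vector $\underline{a}^{(k)}$.

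The main obstacle is purely bookkeeping: the case $i=i_j$ requires juggling two separate sign contributions, one from moving $e_i$ in from the left and one from pushing the second $e_i$ back into position $j$, and the decisive point is that these combine into the $j$-independent factor $(-1)^k$, so that the final tally depends only on the cardinalities $k$ and $m-k$. No structural input beyond the defining anticommutation relations is required.
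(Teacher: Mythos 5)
Your proof is correct, and it takes a genuinely different route from the paper. The paper proves the lemma by induction on $k$: the base case $k=0$ is immediate from $\sum_i e_i^2=-m$, and the inductive step builds a $(k'+1)$-vector as $\underline{a}^{(k'+1)}=\underline{a}^{(k')}e_{\ell}$ with $e_{\ell}$ not occurring in $\underline{a}^{(k')}$, then uses $e_ie_{\ell}=-2\delta_{i\ell}-e_{\ell}e_i$ to peel off the extra factor, so the sign bookkeeping is absorbed into a one-line recursion $(-1)^{k'+1}[2+(2k'-m)]=(-1)^{k'+1}(2(k'+1)-m)$. You instead compute $\sum_i e_ie_Ae_i$ directly on a basis $k$-vector $e_A$ by splitting the index sum into the $k$ indices occurring in $A$ (each conjugation giving $(-1)^ke_A$, via the two sign contributions $(-1)^{j-1}\cdot(-1)\cdot(-1)^{k-j}$, whose $j$-independence you correctly flag as the decisive point) and the $m-k$ indices outside $A$ (each giving $(-1)^{k+1}e_A$), so the count $k-(m-k)=2k-m$ is visible at a glance. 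Both arguments reduce to basis $k$-vectors by linearity (the paper does so implicitly via its ``without loss of generality'' single-term assumption) and use nothing beyond the defining anticommutation relations; your version buys a transparent combinatorial interpretation of the coefficient $2k-m$ as a signed count of indices inside versus outside $A$, while the paper's induction avoids the per-index sign juggling at the cost of hiding that interpretation.
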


\begin{proof}
Given that $a^{(k)}$ is a $k$-vector, possible values for $k$ range from $0$ to $m$. We prove this lemma using induction.
\begin{itemize}
\item $\mathbf{k=0}$ When $\underline{a}^{(k)}$ is a scalar the expression reads $$\sum_{i=1}^m e_i\underline{a}^{(0)}e_i =-m\underline{a}^{(0)},$$ which is identically true.
\item $\mathbf{k=k'+1\ (\leq m)}$ Assuming the validity for $k=k'\ (k'<m)$, one has $\sum_{i=1}^m e_i\underline{a}^{(k')}e_i = (-1)^{k'}(2k'-m) \underline{a}^{(k')}$. Without loss of generality we assume $\underline{a}^{(k)}$ to consist of one single term. Now a $(k'+1)$-vector can be constructed as $\underline{a}^{(k'+1)}=\underline{a}^{(k')}e_{\ell}$ where the basis vector $e_{\ell}$ is not contained in $\underline{a}^{(k')}$. We get
\begin{align*}
\sum_{i=1}^m e_i\underline{a}^{(k'+1)}e_i &= \sum_{i=1}^m e_i\underline{a}^{(k')}e_{\ell}e_i\\
&=\sum_{i=1}^m e_i\underline{a}^{(k')}(-2\delta_{i\ell}-e_ie_{\ell})\\
&=-2e_{\ell}\underline{a}^{(k')} - \left(\sum_{i=1}^m e_i\underline{a}^{(k')}e_i\right) e_{\ell}\\
&=-2(-1)^{k'}\underline{a}^{(k')}e_{\ell} - (-1)^{k'}(2k'-m) \underline{a}^{(k')}e_{\ell}\\
&=(-1)^{k'+1}[2 + (2k'-m) ] \underline{a}^{(k')}e_{\ell}\\
&=(-1)^{k'+1}(2(k'+1)-m) \underline{a}^{(k'+1)},
\end{align*}
which proves the expression for $k=k'+1$.
\end{itemize}
By induction, this proves the lemma.
\end{proof}

\noindent Now we can state the main technical theorem. The integral treated by this theorem will be met several times in the following subsections.
\begin{theorem}\label{ak}
With $\underline{a}\in Cl_{m}$, one has 
\begin{align*}\int\displaylimits_{\mathbb{S}^{m-1}} \uo\ \underline{a}\ \uo\ \mathrm{d}\uo = \frac{2\pi^{m/2}}{m\Gamma\left(\frac{m}{2}\right)}\ \sum_{k=0}^m (-1)^k(2k-m) \underline{a}^{(k)}.
\end{align*}
\end{theorem}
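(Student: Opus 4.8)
The plan is to reduce the spherical integral to the two purely algebraic identities already established in Lemmas \ref{omegai} and \ref{eae}. First I would decompose $\underline{a}$ into its homogeneous $k$-vector parts, writing $\underline{a}=\sum_{k=0}^m \underline{a}^{(k)}$. Since both the integral and the claimed right-hand side are $\mR$-linear in $\underline{a}$, it suffices to prove the identity for a single $k$-vector $\underline{a}^{(k)}$ and then sum over $k$.

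Next I would expand the integrand componentwise. Writing $\uo=\sum_{i=1}^m \omega_i e_i$, the integrand becomes
\begin{align*}
\uo\, \underline{a}^{(k)}\, \uo = \sum_{i,j=1}^m \omega_i \omega_j\, e_i\, \underline{a}^{(k)}\, e_j .
\end{align*}
Because the measure $\mathrm{d}\uo$ is scalar-valued and each $e_i\, \underline{a}^{(k)}\, e_j$ is a fixed Clifford element independent of $\uo$, these non-commuting factors pull out of the integral, leaving
\begin{align*}
\int\displaylimits_{\mathbb{S}^{m-1}} \uo\, \underline{a}^{(k)}\, \uo\, \mathrm{d}\uo = \sum_{i,j=1}^m \left( \int\displaylimits_{\mathbb{S}^{m-1}} \omega_i \omega_j\, \mathrm{d}\uo \right) e_i\, \underline{a}^{(k)}\, e_j .
\end{align*}

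The key step is to show that only the diagonal terms survive. For $i\neq j$ the reflection $\omega_i \mapsto -\omega_i$ maps the sphere to itself and preserves its measure while sending $\omega_i \omega_j$ to its negative, so $\int_{\mathbb{S}^{m-1}} \omega_i \omega_j\, \mathrm{d}\uo = 0$; the diagonal terms $i=j$ are evaluated by Lemma \ref{omegai}. The double sum therefore collapses to
\begin{align*}
\frac{2\pi^{m/2}}{m\Gamma\left(\frac{m}{2}\right)} \sum_{i=1}^m e_i\, \underline{a}^{(k)}\, e_i,
\end{align*}
and Lemma \ref{eae} replaces the remaining sum by $(-1)^k(2k-m)\underline{a}^{(k)}$. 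Summing over $k$ then recovers the stated formula.

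I do not expect a genuine obstacle in this argument; the proof is essentially bookkeeping once the integrand is expanded. The only points requiring care are the vanishing of the off-diagonal cross terms (handled by the reflection symmetry above) and the legitimacy of extracting the non-commuting factors $e_i\, \underline{a}^{(k)}\, e_j$ from the scalar-valued integral, which must be done without reordering the Clifford product. With those two observations in place, the two preliminary lemmas do all the remaining work.
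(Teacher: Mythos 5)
Your proposal is correct and follows essentially the same route as the paper's own proof: expand $\uo=\sum_i \omega_i e_i$, discard off-diagonal terms by symmetry, and apply Lemmas \ref{omegai} and \ref{eae}. The only differences are cosmetic --- you make the $k$-vector decomposition and the reflection argument for the vanishing of $\int_{\mathbb{S}^{m-1}}\omega_i\omega_j\,\mathrm{d}\sigma_{\uo}$ ($i\neq j$) explicit, where the paper simply cites ``symmetry reasons.''
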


\begin{proof}
Writing
\begin{align*}\uo =\sum_{i=1}^m \omega_i e_i,\end{align*}
the integral can be rewritten as
\begin{align*}\int\displaylimits_{\mathbb{S}^{m-1}} \uo\ \underline{a}\ \uo\ \mathrm{d}\uo = \sum_{i,j=1}^m \left( \int\displaylimits_{\ \mathbb{S}^{m-1}} \omega_i\omega_j\mathrm{d}\sigma_{\uo} \right) e_i\underline{a}e_j 
= \sum_{i=1}^m \left( \int\displaylimits_{\ \mathbb{S}^{m-1}} \omega_i^2\mathrm{d}\sigma_{\uo} \right) e_i\underline{a}e_i 
\end{align*}
where the last equality holds because of symmetry reasons. Using lemmas \ref{omegai} and \ref{eae}, the theorem follows.
\end{proof}

\subsection{Mustard convolution}\label{sectie2}
As pointed out in the introduction, a Mustard convolution is defined such that its Fourier transform equals the product of the Fourier transforms of both functions separately. The Mustard convolution corresponding to the slice Fourier transform is therefore defined as
\begin{align*}
f \star_S g = \mathcal{F}_S^{-1}\left(\mathcal{F}_S(f) \mathcal{F}_S(g)\right),
\end{align*}
where $f$ and $g$ are $Cl_{m+1}$-valued functions belonging to $\mathcal{V}$. Using $\bfy=y_0e_0+g\ueta$, $\bfz=z_0e_0+n\uzeta$ and $\bfu=u_0e_0+h\unu$ as integration variables, the right-hand side reads in full
\begin{align*}
\mathcal{F}_S^{-1}\left(\mathcal{F}_S(f) \mathcal{F}_S(g)\right)&(\bfx)=-i\left(\frac{\Gamma\left(\frac{m}{2}\right)}{8c\pi^{m/2+1}}\right)^3 \int\displaylimits_{\mR^{m+1}} \int\displaylimits_{\mR^{m+1}} \int\displaylimits_{\mR^{m+1}} \\
 &\left[ (1+\uo\ueta) e^{+\frac{i}{2c}(x_0y_0-rg)} + (1-\uo\ueta) e^{+\frac{i}{2c}(x_0y_0+rg)} \right] \times \\
& \left[ (1+\ueta\uzeta)\hspace{0.3ex} e^{-\frac{i}{2c}(y_0z_0-gn)} + (1-\ueta\uzeta)\hspace{0.3ex} e^{-\frac{i}{2c}(y_0z_0+gn)} \right] f(\bfz)\ \times \\
& \left[ (1+\ueta\unu) e^{-\frac{i}{2c}(y_0u_0-gh)} + (1-\ueta\unu) e^{-\frac{i}{2c}(y_0u_0+gh)} \right] g(\bfu)\ \times \\
& \mathrm{d}y_0\ \mathrm{d}g\ \mathrm{d}\sigma_{\uy}\ \mathrm{d}z_0\ \mathrm{d}n\ \mathrm{d}\sigma_{\uz}\ \mathrm{d}u_0\ \mathrm{d}h\ \mathrm{d}\sigma_{\uu}.
\end{align*}
Performing the integration over $y_0$ formally, a factor $4\pi c\ \delta(x_0-z_0-u_0)$ shows up in the integrand. Rearranging terms, keeping in mind the integration over $\ueta$ and using distributivity, the remaining part of the integrand reads
\begin{align*}
& \phantom{+\ } (1-\uo\uzeta) f(\bfz) \left( e^{-\frac{i}{2c}(r-n-h)g} + e^{-\frac{i}{2c}(r-n+h)g} + e^{\frac{i}{2c}(r-n+h)g} + e^{\frac{i}{2c}(r-n-h)g}\right)\\
& +(1+\uo\uzeta) f(\bfz) \left( e^{-\frac{i}{2c}(r+n-h)g} + e^{-\frac{i}{2c}(r+n+h)g} +e^{\frac{i}{2c}(r+n+h)g} + e^{\frac{i}{2c}(r+n-h)g} \right)\\
& \left.+(\uo\ueta+\ueta\uzeta) f(\bfz)\ueta\unu \left( e^{-\frac{i}{2c}(r-n-h)g} - e^{-\frac{i}{2c}(r-n+h)g} -e^{\frac{i}{2c}(r-n+h)g} + e^{\frac{i}{2c}(r-n-h)g}\right)\right.\\
&\left. +(\uo\ueta-\ueta\uzeta) f(\bfz)\ueta\unu \left( e^{-\frac{i}{2c}(r+n-h)g} - e^{-\frac{i}{2c}(r+n+h)g} -e^{\frac{i}{2c}(r+n+h)g} + e^{\frac{i}{2c}(r+n-h)g} \right)\right.
\end{align*}
times $g(\bfu)$. Using the formal equality
\begin{align*}
\int\displaylimits_{0}^{+\infty} e^{-\frac{i}{2c}(r-n-h)g} \mathrm{d}g = \frac12\int\displaylimits_{0}^{+\infty}e^{-\frac{i}{2c}(r-n-h)g} \mathrm{d}g+\frac12\int\displaylimits_{-\infty}^{0}e^{\frac{i}{2c}(r-n-h)g} \mathrm{d}g
\end{align*}
to dissolve all terms in the previous expression and recombining terms with respect to the integrands, formal delta distributions are obtained in $r$, $n$ and $h$ too. Writing $f,g\in\mathcal{V}=\text{span}_{Cl}\{\psi_{j,k}\}$ as $f(\bfz)=f_1(z_0,n)+\uzeta f_2(z_0,n)$ and $g(\bfu)=g_1(u_0,h)+\unu g_2(u_0,h)$ we get 
\begin{align*}
\mathcal{F}_S^{-1}\left(\mathcal{F}_S(f) \mathcal{F}_S(g)\right)(\bfx)=-i \left(\frac{\Gamma\left(\frac{m}{2}\right)}{8c\pi^{m/2+1}}\right) &\int\displaylimits_{\mathbb{S}^{m-1}} \int\displaylimits_{-\infty}^{+\infty} \int\displaylimits_{0}^{+\infty} \int\displaylimits_{-\infty}^{+\infty} \int\displaylimits_{0}^{+\infty} \delta(x_0-z_0-u_0)\ \times\\
\Big[(\phantom{\ueta}f_1(z_0,n)+\uo \phantom{\ueta} f_2(z_0,n)) \phantom{\ueta} g_1(u_0,h) &\big( \delta(r-n-h) + \delta(r-n+h) \big) \\
 -\uo(\ueta f_1(z_0,n)+\uo\ueta f_2(z_0,n)) \ueta g_2(u_0,h) &\big( \delta(r-n-h) - \delta(r-n+h)\big)\\
 \phantom{\uo}+(\phantom{\ueta}f_1(z_0,n)-\uo \phantom{\ueta} f_2(z_0,n))\phantom{\ueta} g_1(u_0,h) &\big( \delta(r+n-h) + \delta(r+n+h) \big) \\
 -\uo(\ueta f_1(z_0,n)-\uo\ueta f_2(z_0,n)) \ueta g_2(u_0,h) &\big( \delta(r+n-h) - \delta(r+n+h) \big) \Big]\\
&\times \mathrm{d}\sigma_{\uy}\ \mathrm{d}z_0\ \mathrm{d}n\ \mathrm{d}u_0\ \mathrm{d}h,
\end{align*}
where we have integrated over $\uzeta$ and $\unu$. Given that $r,n$ and $h$ can only take positive values, terms containing $\delta(r+n+h)$ do not contribute to the final result.
Using Theorem \ref{ak}, the integration over $\ueta$ yields
\begin{align*}
\mathcal{F}_S^{-1}&\left(\mathcal{F}_S(f) \mathcal{F}_S(g)\right)(\bfx)=\frac{-i}{4\pi c} \int\displaylimits_{-\infty}^{+\infty} \int\displaylimits_{0}^{+\infty} \int\displaylimits_{-\infty}^{+\infty} \int\displaylimits_{0}^{+\infty} \delta(x_0-z_0-u_0)\ \times\\
& \Bigg\{\hspace{0.3ex}\left[f(z_0,n,\phantom{-}\uo)g_1(u_0,h)- \uo \sum_{k=0}^m (-1)^k \left(\frac{2k}{m}-1\right) f^{(k)}(z_0,n,\phantom{-}\uo) g_2(u_0,h) \right]\\
&\quad \times \delta(r-n-h) \\
&+\left[f(z_0,n,\phantom{-}\uo)g_1(u_0,h)+ \uo \sum_{k=0}^m (-1)^k \left(\frac{2k}{m}-1\right)f^{(k)}(z_0,n,\phantom{-}\uo) g_2(u_0,h) \right]\\
& \quad \times \delta(r-n+h) \\
&+\left[f(z_0,n,-\uo)g_1(u_0,h)- \uo \sum_{k=0}^m (-1)^k \left(\frac{2k}{m}-1\right) f^{(k)}(z_0,n,-\uo) g_2(u_0,h) \right]\\
&\quad \times\ \delta(r+n-h) \Bigg\}\ \mathrm{d}z_0\ \mathrm{d}n\ \mathrm{d}u_0\ \mathrm{d}h,
\end{align*}
where the $k$-vector parts of $f_1$ and $f_2$ have been denoted as $f_1^{(k)}$ and $f_2^{(k)}$, respectively. Performing the integrations over $u_0$ and $h$, the expression for the Mustard convolution reads
\begin{align}
&\mathcal{F}_S^{-1}\left(\mathcal{F}_S(f) \mathcal{F}_S(g)\right)(\bfx)=\frac{-i}{4\pi c} \int\displaylimits_{-\infty}^{+\infty} \nonumber \\
& \bigg\{ \hspace{1.7ex}\int\displaylimits_{0}^{r} \left[f_1(z_0,n)g_1(x_0-z_0,r-n)+ \sum_{k=0}^m (-1)^k \left(\frac{2k}{m}-1\right) f_2^{(k)}(z_0,n) g_2(x_0-z_0,r-n) \right]\mathrm{d}n\nonumber\\
&+\int\displaylimits_{r}^{+\infty} \left[f_1(z_0,n)g_1(x_0-z_0,n-r)- \sum_{k=0}^m (-1)^k \left(\frac{2k}{m}-1\right) f_2^{(k)}(z_0,n) g_2(x_0-z_0,n-r) \right]\mathrm{d}n\nonumber\\
&+\int\displaylimits_{0}^{+\infty} \left[f_1(z_0,n)g_1(x_0-z_0,r+n)- \sum_{k=0}^m (-1)^k \left(\frac{2k}{m}-1\right) f_2^{(k)}(z_0,n) g_2(x_0-z_0,r+n) \right]\mathrm{d}n \nonumber\\
&+\uo \int\displaylimits_{0}^{r} \hspace{0.8ex} \left[f_2(z_0,n)g_1(x_0-z_0,r-n)- \sum_{k=0}^m (-1)^k \left(\frac{2k}{m}-1\right) f_1^{(k)}(z_0,n) g_2(x_0-z_0,r-n) \right]\mathrm{d}n\nonumber\\
&+\uo\int\displaylimits_{r}^{+\infty} \left[f_2(z_0,n)g_1(x_0-z_0,n-r)+ \sum_{k=0}^m (-1)^k \left(\frac{2k}{m}-1\right) f_1^{(k)}(z_0,n) g_2(x_0-z_0,n-r) \right]\mathrm{d}n\nonumber\\
&-\uo\int\displaylimits_{0}^{+\infty} \left[f_2(z_0,n)g_1(x_0-z_0,r+n)+ \sum_{k=0}^m (-1)^k \left(\frac{2k}{m}-1\right) f_1^{(k)}(z_0,n) g_2(x_0-z_0,r+n) \right]\mathrm{d}n \bigg\} \nonumber\\
& \mathrm{d}z_0. \label{appendix}
\end{align}

Given the complexity of the resulting expression, we have implemented and thoroughly verified this result using the computersoftware Maple, which confirmed our calculations. 
Nonetheless, a careful study of the different terms suggests some hidden symmetry underneath. Indeed, defining even and odd extensions in the second argument as
\begin{align*}
f^{even}_1(z_0,n)=
\begin{cases}
f_1(z_0,n)\qquad &n>0\\
f_1(z_0,-n)\qquad &n<0
\end{cases}
\intertext{and}
f^{odd}_1(z_0,n)=
\begin{cases}
f_1(z_0,n)\qquad &n>0\\
-f_1(z_0,-n)\qquad &n<0
\end{cases}
\end{align*}
we finally obtain
\begin{align*}
&\mathcal{F}_S^{-1}\left(\mathcal{F}_S(f) \mathcal{F}_S(g)\right)(\bfx)\\
&=\frac{-i}{4\pi c} \int\displaylimits_{-\infty}^{+\infty} \Bigg\{ \int\displaylimits_{-\infty}^{+\infty} \Bigg[f_1^{even}(z_0,n)g_1^{even}(x_0-z_0,r-n)\\
& \phantom{\frac{-i}{4\pi c} \int \uo \bigg\{\uo}\left.+ \sum_{k=0}^m (-1)^k \left(\frac{2k}{m}-1\right) f_2^{odd,(k)}(z_0,n) g_2^{odd}(x_0-z_0,r-n) \right]\mathrm{d}n\\
&\phantom{\frac{-i}{4\pi c} \int \bigg\{}+\uo \int\displaylimits_{-\infty}^{+\infty} \Bigg[f_2^{odd}(z_0,n)g_1^{even}(x_0-z_0,r-n)\\
&\phantom{\frac{-i}{4\pi c} \int \uo \bigg\{\uo} \left.- \sum_{k=0}^m (-1)^k \left(\frac{2k}{m}-1\right) f_1^{even,(k)}(z_0,n) g_2^{odd}(x_0-z_0,r-n) \right]\mathrm{d}n \Bigg\}\mathrm{d}z_0.
\end{align*}
We can thus end this subsection with the following definition and theorem.
\begin{definition}[Mustard convolution] Using $\star$ to denote the classical two-dimensional convolution as stated in \eqref{convolutie}, the Mustard convolution $\star_S$ corresponding to the slice Fourier transform is defined as
\begin{align*}
&f\star_Sg(\bfx)\\
&=\frac{-i}{4\pi c} \Bigg\{ f_1^{even}\star g_1^{even}(x_0,r)+ \sum_{k=0}^m (-1)^k \left(\frac{2k}{m}-1\right) f_2^{odd,(k)}\star g_2^{odd}(x_0,r)\\
&\phantom{=\frac{-i}{4\pi c}}+\uo \left( f_2^{odd}\star g_1^{even}(x_0,r)- \sum_{k=0}^m (-1)^k \left(\frac{2k}{m}-1\right) f_1^{even,(k)}\star g_2^{odd}(x_0,r)\right)\Bigg\},
\end{align*}
where $f,g\in\mathcal{V}$ were written as $f(\bfx)=f_1(x_0,r)+\uo f_2(x_0,r)$ and $g(\bfx)=g_1(x_0,r)+\uo g_2(x_0,r)$ and the superscripts even and odd denote the respective extensions in the second argument.
\end{definition}

\begin{theorem} Under the action of the slice Fourier transform, the Mustard convolution $\star_S$ obeys the classical convolution property
\begin{align*}
\mathcal{F}_S(f\star_S g) = \mathcal{F}_S(f) \mathcal{F}_S(g).
\end{align*}
\end{theorem}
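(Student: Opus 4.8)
The plan is to read the conclusion straight off the construction of $\star_S$. By its defining relation, $f\star_S g=\mathcal{F}_S^{-1}\big(\mathcal{F}_S(f)\mathcal{F}_S(g)\big)$, and the explicit formula in the preceding Definition is nothing but the full expansion of this same expression, so the two descriptions of $f\star_S g$ coincide. Applying $\mathcal{F}_S$ to both sides and using that $\mathcal{F}_S$ and $\mathcal{F}_S^{-1}$ are mutually inverse (the content of \thmref{inverstrans} together with the subsequent inversion theorem) gives
\begin{align*}
\mathcal{F}_S(f\star_S g)=\mathcal{F}_S\big(\mathcal{F}_S^{-1}(\mathcal{F}_S(f)\mathcal{F}_S(g))\big)=\mathcal{F}_S(f)\mathcal{F}_S(g),
\end{align*}
which is exactly the asserted convolution property.

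The step I expect to cause the only genuine difficulty is the well-definedness underlying this one-line argument. The inversion identity $\mathcal{F}_S\circ\mathcal{F}_S^{-1}=\mathrm{id}$ was established on $\mathcal{V}=\text{span}_{Cl}\{\psi_{j,k}\}$, but the pointwise product $\mathcal{F}_S(f)\mathcal{F}_S(g)$ of two transformed functions need not itself be a finite linear combination of Clifford-Hermite functions, hence need not lie in $\mathcal{V}$. I would address this either by extending the inversion identity to the closure of $\mathcal{V}$ on which the product actually lives, or by restricting to pairs $f,g$ for which $\mathcal{F}_S(f)\mathcal{F}_S(g)\in\mathcal{V}$, so that \thmref{inverstrans} and the inversion theorem apply verbatim; the non-commutativity of $Cl_{m+1}$ additionally forces one to keep the order of the product fixed throughout.

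As an independent confirmation of the lengthy closed-form computation, I would also verify the property directly from the explicit Definition. Using the reduction in \secref{explicit}, each slice Fourier transform collapses to a single classical two-dimensional transform $FT$ of the even/odd extensions of the scalar- and $\uo$-components, so the claim reduces to the classical two-dimensional convolution theorem $FT(u\star v)=FT(u)\,FT(v)$ applied to the scalar part and the $\uo$-part separately. The main bookkeeping obstacle on this route is the $k$-vector decomposition: the coefficients $(-1)^k(2k/m-1)$ produced by \thmref{ak} must recombine correctly after the transform, and the parities of $FT(F)^{\pm}$ must be matched against the even and odd extensions entering the explicit convolution formula.
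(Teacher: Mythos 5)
Your proposal is correct and coincides with the paper's own (implicit) argument: the explicit formula in the Definition is obtained in the paper precisely by unfolding $f\star_S g=\mathcal{F}_S^{-1}\left(\mathcal{F}_S(f)\,\mathcal{F}_S(g)\right)$ through the long kernel computation, after which the theorem follows at once by applying $\mathcal{F}_S$ and invoking Theorem~\ref{inverstrans} together with the subsequent inversion theorem, exactly as you do. Your well-definedness caveat (that $\mathcal{F}_S(f)\mathcal{F}_S(g)$ need not lie in $\mathcal{V}$) is a point the paper glosses over, and your proposed fix via the reduction to the bijective classical two-dimensional Fourier transform of \secref{explicit} is precisely the mechanism the paper itself invokes later when proving the proposition connecting the Mustard convolution with the generalised translation.
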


\subsection{The generalised translation}
Another way to generalise the convolution property is by adapting the translation operator $t_y$ in the definition of the classical convolution stated in $\eqref{convolutie}$. Using $T_{\bfy}$ to denote this generalised translation operator, the Fubini theorem requires
\begin{align*}
\cF_S\left(\ \int\displaylimits_{\mR^{m+1}}T_{\bfy} (f)(\bfx) g(\bfy) \mathrm{d}\bfy \right) (\bfz)&= \int\displaylimits_{\mR^{m+1}} \mathcal{F}_S (T_{\bfy} (f))(\bfz)\ g(\bfy)\ \mathrm{d}\bfy
\end{align*}
to be equal to
\begin{align*}
\left(\mathcal{F}_S(f) \mathcal{F}_S(g)\right)(\bfz)&=\int\displaylimits_{\mR^{m+1}}\mathcal{F}_S (f)(\bfz)\ \mathcal{K}^M(\bfy,\bfz)\ g(\bfy)\ \mathrm{d}\bfy,
\end{align*}
which is achieved when the integrands are equal. The generalised translation operator $T_{\bfy}$ is thus defined as
\begin{align}
T_{\bfy} f(\bfx) = \mathcal{F}_S^{-1}\left( \mathcal{F}_S(f)(\bfz) K^M(\bfy,\bfz)\right),\label{gentrans}
\end{align}
so its slice Fourier transform generates an extra factor $\cK^M(\bfy,\bfz)$ to the right of the slice Fourier transform of the function it is acting upon.
\begin{remark}
Note the crucial difference between the above defined generalised translation $T_{\bfy}$ and the translation operator studied in \cite{de2011clifford}. As the slice Fourier transform of $T_{\bfy}(f)$ adds the factor $\cK^M$ to the right of $\cF_S(f)$, the kernel expression $\cK^M$ can subsequently be right-multiplied with a neighbouring factor.
\end{remark}
Denoting the kernel function of the inverse slice Fourier transform as $\cK^{M,-1}$, the following expression is shown to equal the inverse Fourier transform of the product of the slice Fourier transforms of $f$ and $g$:
\begin{align*}
\int\displaylimits_{\mR^{m+1}}&T_{\bfy}(f)(\bfx)g(\bfy)\mathrm{d}\bfy\\
&= \int\displaylimits_{\mR^{m+1}}\left(\ \int\displaylimits_{\mR^{m+1}}  \cK^{M,-1}(\bfz,\bfx) \mathcal{F}_S(f)(\bfz) \cK^M(\bfy,\bfz)g(\bfy) \ \mathrm{d}z_0\mathrm{d}n\mathrm{d}\sigma_{\uz}\right)\mathrm{d}y_0\mathrm{d}g\mathrm{d}\sigma_{\uy}\\
&= \int\displaylimits_{\mR^{m+1}}  \cK^{M,-1}(\bfz,\bfx) \mathcal{F}_S(f)(\bfz) \left(\ \int\displaylimits_{\mR^{m+1}} \cK^M(\bfy,\bfz)\ g(\bfy) \ \mathrm{d}y_0\mathrm{d}g\mathrm{d}\sigma_{\uy}\right) \mathrm{d}z_0\mathrm{d}n\mathrm{d}\sigma_{\uz}\\
&= \int\displaylimits_{\mR^{m+1}}  \cK^{M,-1}(\bfz,\bfx) \left(\mathcal{F}_S(f)(\bfz) \mathcal{F}_S(g)(\bfz)\right) \ \mathrm{d}z_0\mathrm{d}n\mathrm{d}\sigma_{\uz}\\
&= \mathcal{F}_S^{-1} \left(\mathcal{F}_S(f) \mathcal{F}_S(g)\right)(\bfx).
\end{align*}
As the above defined generalised translation operator $T_{\bfy}$ shows the desired behaviour, we will now calculate its explicit expression. Substituting all known kernel functions and writing $\bfx=x_0+r\sigma_{\ux}, \bfy=y_0+g\sigma_{\ueta}, \bfz=z_0+h\sigma_{\uzeta}$ and $\bfu=u_0+n\sigma_{\unu}$ yields
\begin{align*}
T_{\bfy}&(f)(\bfx) = -i \left( \frac{\Gamma\left(\frac{m}{2}\right)}{8c\pi^{m/2+1}} \right)^3 \int\displaylimits_{\mR^{m+1}} \int\displaylimits_{\mR^{m+1}} \\
& \left[ (1+\uo\uzeta) e^{+\frac{i}{2c}(x_0z_0-rn)} + (1-\uo\uzeta) e^{+\frac{i}{2c}(x_0z_0+rn)} \right] \times \\
&\left[ (1+\uzeta\unu) e^{-\frac{i}{2c}(z_0u_0-nh)} + (1-\uzeta\unu) e^{-\frac{i}{2c}(z_0u_0+nh)} \right] f(\bfu)\ \times \\
& \left[ (1+\uzeta\ueta) e^{-\frac{i}{2c}(y_0z_0-gn)}\hspace{0.3ex} + (1-\uzeta\ueta)\hspace{0.3ex} e^{-\frac{i}{2c}(y_0z_0+gn)} \right] \mathrm{d}z_0\ \mathrm{d}n\ \mathrm{d}\sigma_{\uz}\ \mathrm{d}u_0\ \mathrm{d}h\ \mathrm{d}\sigma_{\uu}.
\end{align*}
As usual, integrating over $z_0$ formally gives a delta distribution. Rearranging the remaining terms we get
\begin{align*}
T_{\bfy} &(f)(\bfx) = -i \left( \frac{\Gamma\left(\frac{m}{2}\right)}{8c\pi^{m/2+1}} \right)^3 \int\displaylimits_{0}^{+\infty} \int\displaylimits_{\mathbb{S}^{m-1}}  \int\displaylimits_{\mR^{m+1}} 4\pi c\ \delta(x_0-y_0-u_0)\ \times \\
\bigg\{&\left[ (1+\uo\uzeta +\uzeta\unu-\uo\unu) e^{-\frac{i}{2c}(r-h-g)n} + (1+\uo\uzeta -\uzeta\unu+\uo\unu) e^{-\frac{i}{2c}(r+h-g)n} \right. \\
&\phantom{.}\left. (1-\uo\uzeta +\uzeta\unu+\uo\unu)\hspace{0.3ex} e^{+\frac{i}{2c}(r+h+g)n} + (1-\uo\uzeta -\uzeta\unu-\uo\unu)\hspace{0.3ex} e^{+\frac{i}{2c}(r-h+g)n} \right] \\
&\phantom{.}\times f(\bfu)(1+\uzeta\ueta)\\
+ &\left[ (1+\uo\uzeta +\uzeta\unu-\uo\unu) e^{-\frac{i}{2c}(r-h+g)n} + (1+\uo\uzeta -\uzeta\unu+\uo\unu) e^{-\frac{i}{2c}(r+h+g)n} \right. \\
&\phantom{.}\left. (1-\uo\uzeta +\uzeta\unu+\uo\unu) e^{+\frac{i}{2c}(r+h-g)n} + (1-\uo\uzeta -\uzeta\unu-\uo\unu) e^{+\frac{i}{2c}(r-h-g)n} \right] \\
&\phantom{.} \times f(\bfu)(1-\uzeta\ueta) \bigg\}\ \mathrm{d}n\ \mathrm{d}\sigma_{\uz}\ \mathrm{d}u_0\ \mathrm{d}h\ \mathrm{d}\sigma_{\uu}.
\end{align*}
Because of symmetry reasons, all terms containing $\uzeta$ will disappear after integration over $\mS^{m-1}$. As before, only functions in the span of $\{\psi_{j,k}\}$ are considered so $f$ can be written as $f(\bfu)=f_1(u_0,h)+\unu f_2(u_0,h)$. Performing the same manipulations as before (see section \ref{sectie2}) on the integrals over $[0,+\infty[$ and integrating over $n, \uzeta$ and $\unu$ yields
\begin{align*}
T_{\bfy} (f)(\bfx) =& \frac{-i\Gamma\left(\frac{m}{2}\right)}{8c\pi^{m/2+1}} \int\displaylimits_{-\infty}^{+\infty} \int\displaylimits_{0}^{+\infty} \delta(x_0-y_0-u_0) \times\\
\bigg\{&\left[f(u_0,h,\phantom{-}\uo)+\uo\sum_{k=0}^m (-1)^k\left(\frac{2k}{m}-1\right) \left(f_1^{(k)}+\uo f_2^{(k)}\right)(u_0,h) \ueta \right]\\
&\times\ \delta(r-h-g)\\
+&\left[f(u_0,h,\phantom{-}\uo)-\uo\sum_{k=0}^m (-1)^k\left(\frac{2k}{m}-1\right) \left(f_1^{(k)}+\uo f_2^{(k)}\right)(u_0,h) \ueta \right]\\
&\times\ \delta(r-h+g)\\
+&\left[f(u_0,h,-\uo)+\uo\sum_{k=0}^m (-1)^k\left(\frac{2k}{m}-1\right) \left(f_1^{(k)}-\uo f_2^{(k)}\right)(u_0,h) \ueta \right]\\
&\times\ \delta(r+h-g) \bigg\}\ \mathrm{d}u_0\ \mathrm{d}h.
\end{align*}
Performing the last two integrations, we finally obtain the explicit action of the generalised translation $T_{\bfy}$ on a function $f$:
\begin{align*}
T_{\bfy} &(f)(\bfx) =\frac{-i\Gamma\left(\frac{m}{2}\right)}{8c\pi^{m/2+1}}\ \times\\
&\bigg\{\hspace{0.9ex}f(x_0-y_0,r-g,\phantom{-}\uo)+\uo\sum_{k=0}^m (-1)^k\left(\frac{2k}{m}-1\right) f^{(k)}(x_0-y_0,r-g,\phantom{-}\uo) \ueta \\
&+f(x_0-y_0,r+g,\phantom{-}\uo)-\uo\sum_{k=0}^m (-1)^k\left(\frac{2k}{m}-1\right) f^{(k)}(x_0-y_0,r+g,\phantom{-}\uo) \ueta \\
&+f(x_0-y_0,g-r,-\uo)+\uo\sum_{k=0}^m (-1)^k\left(\frac{2k}{m}-1\right) f^{(k)}(x_0-y_0,g-r,-\uo) \ueta \bigg\}
\end{align*}
where terms in $f$ and $f^{(k)}(\bfx)=f_1^{(k)}(x_0,r)+\uo f_2^{(k)}(x_0,r)$ are only included when their second arguments are positive. Making the $\uo$-dependence explicit yields
\begin{align*} 
T_{\bfy} f(\bfx) &=\frac{-i\Gamma\left(\frac{m}{2}\right)}{8c\pi^{m/2+1}}\ \times \\
\bigg\{&\left[f_1(x_0-y_0,r-g)-\sum_{k=0}^m (-1)^k\left(\frac{2k}{m}-1\right) f_2^{(k)}(x_0-y_0,r-g) \ueta \right]\\
+&\left[f_1(x_0-y_0,r+g)+\sum_{k=0}^m (-1)^k\left(\frac{2k}{m}-1\right) f_2^{(k)}(x_0-y_0,r+g) \ueta \right]\\
+&\left[f_1(x_0-y_0,g-r)+\sum_{k=0}^m (-1)^k\left(\frac{2k}{m}-1\right) f_2^{(k)}(x_0-y_0,g-r) \ueta \right]\\
+&\uo\left[ \phantom{-}f_2(x_0-y_0,r-g)+\sum_{k=0}^m (-1)^k\left(\frac{2k}{m}-1\right) f_1^{(k)}(x_0-y_0,r-g) \ueta \right]\\
+&\uo\left[\phantom{-}f_2(x_0-y_0,r+g)-\sum_{k=0}^m (-1)^k\left(\frac{2k}{m}-1\right) f_1^{(k)}(x_0-y_0,r+g) \ueta \right]\\
+&\uo\left[- f_2(x_0-y_0,g-r)+\sum_{k=0}^m (-1)^k\left(\frac{2k}{m}-1\right) f_1^{(k)}(x_0-y_0,g-r) \ueta \right]\bigg\}.
\end{align*}
where again terms in $f_1$ and $f_2$ are only included when their second argument is positive (so depending on $r \in [0,+\infty[$ being smaller or bigger than $g\in[0,+\infty[$).

\subsection{Connection between both approaches}

\begin{proposition}
The Mustard convolution $\star_S$ and the generalised translation $T_{\bfy}$ satisfy
\begin{align}\label{equal}
(f\star_S g) (\bfx)= \int_{\mR^{m+1}}T_{\bfy}(f)(\bfx)g(\bfy)\ \mathrm{d}\bfy.
\end{align}
\end{proposition}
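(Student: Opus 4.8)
The plan is to observe that the identity \eqref{equal} is essentially immediate once one recognises that \emph{both} sides reduce to the single expression $\mathcal{F}_S^{-1}\left(\mathcal{F}_S(f)\,\mathcal{F}_S(g)\right)(\bfx)$. The left-hand side equals it \emph{by definition} of the Mustard convolution, $\star_S = \mathcal{F}_S^{-1}\circ(\mathcal{F}_S(\cdot)\,\mathcal{F}_S(\cdot))$, while the right-hand side reduces to it by the Fubini-type computation already carried out when $T_{\bfy}$ was introduced. So the proposition amounts to matching these two reductions.

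Concretely, first I would insert the defining formula \eqref{gentrans} for $T_{\bfy}$ into the integrand of the right-hand side of \eqref{equal}, writing the inverse transform out with kernel $\cK^{M,-1}(\bfz,\bfx)$. This exhibits an iterated integral over $\bfz$ and $\bfy$. Next I would apply Fubini to bring the $\bfy$-integration inside, past the factors $\cK^{M,-1}(\bfz,\bfx)$ and $\mathcal{F}_S(f)(\bfz)$, and collect the piece $\int_{\mR^{m+1}} \cK^M(\bfy,\bfz)\,g(\bfy)\,\mathrm{d}\bfy$, which by Definition~\ref{sFT} is precisely $\mathcal{F}_S(g)(\bfz)$. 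Crucially, because $T_{\bfy}$ was designed so that $\cK^M(\bfy,\bfz)$ sits to the \emph{right} of $\mathcal{F}_S(f)(\bfz)$, the reassembled integrand is $\cK^{M,-1}(\bfz,\bfx)\,\mathcal{F}_S(f)(\bfz)\,\mathcal{F}_S(g)(\bfz)$ with the Clifford factors in the correct order, and no quantity ever has to be commuted past a Clifford-valued factor. The remaining $\bfz$-integral against $\cK^{M,-1}(\bfz,\bfx)$ is then, by Theorem~\ref{inverstrans}, exactly $\mathcal{F}_S^{-1}\left(\mathcal{F}_S(f)\,\mathcal{F}_S(g)\right)(\bfx)$, i.e.\ $(f\star_S g)(\bfx)$.

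The only genuine obstacle is justifying the interchange of the order of integration, but this is harmless here: all functions are taken in $\mathcal{V}=\mathrm{span}_{Cl}\{\psi_{j,k}\}$, so $f$ and $g$ are finite combinations of Clifford-Hermite functions (Gaussians times polynomials), absolutely integrable against the kernels, and each kernel $\cK^M$, $\cK^{M,-1}$ is bounded in modulus on the relevant domain; hence Fubini applies verbatim on $\mathcal{V}$. In fact the cleanest write-up simply cites the displayed chain of equalities that directly preceded the proposition, since it already established $\int_{\mR^{m+1}} T_{\bfy}(f)(\bfx)\,g(\bfy)\,\mathrm{d}\bfy = \mathcal{F}_S^{-1}\left(\mathcal{F}_S(f)\,\mathcal{F}_S(g)\right)(\bfx)$, and combining it with the definition of $\star_S$ finishes the proof in one line. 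As an independent check one could instead substitute the explicit closed forms obtained for $T_{\bfy}f$ and for $f\star_S g$ and verify agreement term by term after integrating over $\bfy$ and splitting into even and odd radial extensions; this is far more laborious and yields no extra insight, so I would keep it in reserve.
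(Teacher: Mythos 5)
Your proof is correct, and it is essentially the chain of equalities the paper itself displays at the end of its generalised-translation subsection; but it is \emph{not} the route the paper takes in the actual proof of the proposition, so a comparison is in order. You argue constructively: insert the defining formula \eqref{gentrans} for $T_{\bfy}$, apply Fubini to pull the $\bfy$-integral inside (legitimate on $\mathcal{V}$, since the closed-form kernels are bounded combinations of unimodular exponentials and Clifford-Hermite functions are Gaussians times polynomials, and since the order of the Clifford factors is preserved because $\cK^M(\bfy,\bfz)$ sits to the right of $\cF_S(f)(\bfz)$), recognise $\int_{\mR^{m+1}}\cK^M(\bfy,\bfz)g(\bfy)\,\mathrm{d}\bfy=\cF_S(g)(\bfz)$, and conclude that the right-hand side equals $\cF_S^{-1}\left(\cF_S(f)\,\cF_S(g)\right)(\bfx)$, which is the Mustard convolution by its working definition. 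The paper instead proves the proposition indirectly: it applies $\cF_S$ to both sides of \eqref{equal}, notes that each yields $\cF_S(f)\,\cF_S(g)$, and then invokes injectivity of $\cF_S$ on $\mathcal{V}$, justified via the reduction of the slice Fourier transform to the bijective classical two-dimensional Fourier transform established in the explicit-calculation subsection. The two routes buy slightly different things: yours is self-contained and avoids any appeal to injectivity, needing only the Fubini step and the definition of $\star_S$; the paper's injectivity argument is arguably more robust given that its formal \emph{Definition} of $\star_S$ is the explicit even/odd-extension convolution formula, obtained from $\cF_S^{-1}\left(\cF_S(f)\,\cF_S(g)\right)$ only through formal delta-distribution manipulations, so comparing both sides in the Fourier domain sidesteps re-certifying that derivation. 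The explicit term-by-term verification you mention keeping in reserve (via the substitutions $z_0=x_0-y_0$ and $n=\pm r\pm g$) is in fact also carried out in the paper, as a supplementary check after the short proof.
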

\begin{proof}
Taking the slice Fourier transform of both sides of \eqref{equal} gives the same expression: $\cF(f)\cF(g)$. In section \ref{explicit} it was shown how the slice Fourier transform (and therefore also its inverse) can be calculated using the classical two-dimensional Fourier transform. Given that the latter is a bijective transform, the above equality follows immediately.
\end{proof}

We can also verify this proposition explicitly:
\begin{align*}
\int\displaylimits_{\mR^{m+1}}&T_{\bfy} (f)(\bfx) g(\bfy) \mathrm{d}\bfy =\frac{-i\Gamma\left(\frac{m}{2}\right)}{8c\pi^{m/2+1}} \int\displaylimits_{-\infty}^{+\infty} \int\displaylimits_{0}^{+\infty} \int\displaylimits_{\mathbb{S}^{m-1}}\\
&\bigg\{\hspace{0.9ex}f(x_0-y_0,r-g,\phantom{-}\uo)+\uo\sum_{k=0}^m (-1)^k\left(\frac{2k}{m}-1\right) f^{(k)}(x_0-y_0,r-g,\phantom{-}\uo) \ueta \\
&+f(x_0-y_0,r+g,\phantom{-}\uo)-\uo\sum_{k=0}^m (-1)^k\left(\frac{2k}{m}-1\right) f^{(k)}(x_0-y_0,r+g,\phantom{-}\uo) \ueta \\
&+f(x_0-y_0,g-r,-\uo)+\uo\sum_{k=0}^m (-1)^k\left(\frac{2k}{m}-1\right) f^{(k)}(x_0-y_0,g-r,-\uo) \ueta \bigg\}\\
&\times g(\bfy)\ \mathrm{d} y_0\ \mathrm{d} g\ \mathrm{d}\sigma_{\uy}
\end{align*}
where the different terms are only taken into account when the second argument of $f$ or $f^{(k)}$ is positive. Writing $g(\bfy)=g_1(y_0,g)+\ueta g_2(y_0,g)$ and performing the integration over $\ueta$, we obtain

\begin{align*}
&\int\displaylimits_{\mR^{m+1}}T_{\bfy} (f)(\bfx) g(\bfy) \mathrm{d}\bfy =\frac{-i}{4\pi c} \int\displaylimits_{-\infty}^{+\infty} \int\displaylimits_{0}^{+\infty} \int\displaylimits_{\mathbb{S}^{m-1}}\\
&\bigg\{\hspace{0.9ex}f(x_0-y_0,r-g,\phantom{-}\uo)g_1(y_0,g)-\uo\sum_{k=0}^m (-1)^k\left(\frac{2k}{m}-1\right) f^{(k)}(x_0-y_0,r-g,\phantom{-}\uo) g_2(y_0,g) \\
&+f(x_0-y_0,r+g,\phantom{-}\uo)g_1(y_0,g)+\uo\sum_{k=0}^m (-1)^k\left(\frac{2k}{m}-1\right) f^{(k)}(x_0-y_0,r+g,\phantom{-}\uo) g_2(y_0,g) \\
&+f(x_0-y_0,g-r,-\uo)g_1(y_0,g)-\uo\sum_{k=0}^m (-1)^k\left(\frac{2k}{m}-1\right) f^{(k)}(x_0-y_0,g-r,-\uo) g_2(y_0,g) \bigg\}\\
& \mathrm{d} y_0\ \mathrm{d} g.
\end{align*}
Performing the substitutions $z_0=x_0-y_0$ and $n=\pm r\pm g$, where the signs depend on the integrand, we indeed retrieve the expression for $\mathcal{F}_S^{-1}(\mathcal{F}_S(f)\mathcal{F}_S(g))$ from \eqref{appendix}.

\section{Conclusion}
The article at hand introduces the classical Fourier transform to the slice Clifford setting. Using the Clifford-Hermite functions defined in \cite{Cnudde} as a basis of eigenfunctions, corresponding eigenvalues are obtained from the scalar differential equation they satisfy. The combination of these data allowed for a formal definition of the slice Fourier transform. Its explicit expression was obtained using the Mehler and Hille-Hardy formulas. The closed integral expression evoked a closer study of its main characteristics including its differential and symmetry properties and the construction of the inverse transform. Also the computational load of the slice Fourier transform is briefly addressed.\\
The second part of the paper investigates how to construct an appropriate counterpart of the classical convolution. For its slice Fourier transform to equal the product of the slice Fourier transforms of the functions it is acting upon, two possibilities are presented. First the Mustard convolution is studied and its explicit expression is obtained. In a second approach, a generalised translation operator is introduced to the integrand of the convolution expression. Given that both approaches show the same extension of the classical convolution property, the paper finishes by highlighting their connection.

\section{Acknowledgements}
L. Cnudde and H. De Bie are supported by the UGent BOF starting grant 01N01513.

\end{document}